\newtheorem{theorem}{Theorem}[section]
\newtheorem{lemma}[theorem]{Lemma}
\newtheorem{proposition}[theorem]{Proposition}
\newtheorem{corollary}[theorem]{Corollary}
\theoremstyle{definition}
\theoremstyle{remark}
\newtheorem{remark}[theorem]{Remark}
\numberwithin{equation}{section}
\newcommand{\calG}{{\mathcal G}}
\newcommand{\calK}{{\mathcal K}}
\newcommand{\calW}{{\mathcal W}}
\let\vec\oldvec
\let\epsilon\varepsilon
\let\phi\varphi
\DeclareMathOperator{\spn}{span}
\DeclareMathOperator{\dist}{dist}
\DeclareMathOperator{\vec}{\mathbf{vec}}
\DeclareMathOperator{\depth}{depth}
\DeclareMathOperator{\size}{size}
\DeclareMathOperator{\Realiz}{{\mathrm{R}}}
\DeclareMathOperator{\bRealiz}{\boldsymbol{{\mathrm{R}}}}
\newtheorem{assumption}{Assumption}
\newtheorem{setting}{Setting}
\newcommand{\bvec}[1]{\bm{#1}}
\newcommand{\bmat}[1]{\mathbf{#1}}
\newcommand{\tbc}{\bvec{\tilde{c}}}
\newcommand{\bck}{\bc^{v,(k)}}
\newcommand{\tbck}{\tbc^{v,(k)}}
\newcommand{\bcK}{\bc^{v,(K)}}
\newcommand{\bckpone}{\bc^{v,(k+1)}}
\newcommand{\tbckpone}{\tbc^{v,(k+1)}}
\newcommand{\analysis}{\mathsf{A}}
\newcommand{\uvNkpone}{u^{v, (k+1)}_N}
\newcommand{\uvNk}{u^{v, (k)}_N}
\newcommand{\cS}{\mathcal{S}}
\newcommand{\tcS}{\widetilde{\mathcal{S}}}
\newcommand{\hpsi}{\hat{\psi}}
\newcommand{\lnorm}{\left\vert\kern-0.25ex\left\vert\kern-0.25ex\left\vert}
\newcommand{\rnorm}{\right\vert\kern-0.25ex\right\vert\kern-0.25ex\right\vert}
\newcommand{\tC}{{\widetilde{C}}}
\newcommand{\tZ}{{\widetilde{Z}}}
\newcommand{\tL}{{\widetilde{L}}}
\newcommand{\cK}{\mathcal{K}}
\newcommand{\cL}{\mathcal{L}}
\newcommand{\cG}{\mathcal{G}}
\newcommand{\tcG}{\widetilde{\mathcal{G}}}
\newcommand{\cE}{\mathcal{E}}
\newcommand{\cI}{\mathcal{I}}
\newcommand{\cT}{\mathcal{T}}
\newcommand{\bx}{\bvec{x}}
\newcommand{\by}{\bvec{y}}
\newcommand{\bzero}{{\bvec{0}}}
\newcommand{\Clip}{C_{\mathrm{Lip}}}
\newcommand{\tClip}{\widetilde{C}_{\mathrm{Lip}}}
\newcommand{\cA}{\mathcal{A}}
\newcommand{\cR}{\mathcal{R}}
\newcommand{\cD}{\mathcal{D}}
\newcommand{\tcK}{\widetilde{\mathcal{K}}}
\newcommand{\cU}{\mathcal{U}}
\newcommand{\cO}{\mathcal{O}}
\newcommand{\N}{\mathbb{N}}
\newcommand{\Id}{\mathbf{Id}}
\newcommand{\bA}{\bmat{A}}
\newcommand{\bB}{\bmat{B}}
\newcommand{\hbB}{\bmat{\widehat{B}}}
\newcommand{\bC}{\bmat{C}}
\newcommand{\bD}{\bmat{D}}
\newcommand{\bX}{\bmat{X}}
\newcommand{\bH}{\bmat{H}}
\newcommand{\bc}{\bvec{c}}
\newcommand{\hbff}{\bvec{\hat{f}}}
\newcommand{\Phiappx}{\Phi^{\mathrm{app}}}
\newcommand{\Phirec}{\Phi^{\mathrm{dec}}}
\newcommand{\Phistep}{\Phi^{\mathrm{step}}}
\newcommand{\Phiit}{\Phi^{\mathrm{it}}}
\newcommand{\Phiinput}{\Phi^{\hbB}_{N, M}}
\newcommand{\epsit}{\epsilon_{\mathrm{it}}}
\newcommand{\epsapp}{\epsilon_{\mathrm{app}}}
\newcommand{\epsrec}{\epsilon_{\mathrm{dec}}}
\newcommand{\frab}{\mathfrak{b}}
\newcommand{\sconc}{\odot}
\newcommand{\R}{\mathbb{R}}
\newcommand{\C}{\mathbb{C}}
\newcommand{\hPsi}{\widehat{\Psi}}
\newcommand{\cDab}{\cD_{\alpha, \beta}}
\newcommand{\cKab}{\cK_{\alpha, \beta}}
\newcommand{\cKabm}{\cK_{\alpha, \beta}^m}
\newcommand{\tbeta}{\tilde{\beta}}
\newcommand{\NN}{\mathsf{NN}}
\newcommand{\bn}{\bm{n}}
\newcommand{\brho}{{\bm{\varrho}}}
\DeclareMathOperator{\ReLU}{ReLU}
\author{Carlo Marcati}
\address{Dipartimento di Matematica ``Felice Casorati'',
  University of Pavia, Italy}
\email{carlo.marcati@unipv.it}
\thanks{CM is a member of GNCS -- INdAM and acknowledges the support of the Italian Ministry of University and Research
  (MUR) through the PRIN 2022 PNRR project NOTES (No. P2022NC97R), funded by the
  European Union -- Next Generation EU and the PRIN 2022 project ASTICE
  (No. 202292JW3F)}
\author{Christoph Schwab}
\address{Seminar for Applied Mathematics, 
  ETH Z\"urich, Switzerland}
\email{christoph.schwab@sam.math.ethz.ch}
\title[Expression rates of Neural Operators for Elliptic PDEs in Polytopes]{
Expression Rates of Neural Operators 
\\
for Linear Elliptic PDEs in Polytopes}
\keywords{Neural Operators, Approximation Theory, Deep Neural Networks, Elliptic PDEs, Kolmogorov $N$-widths}
\subjclass[2020]{35J15, 65N15, 68T07}
\begin{document}
\maketitle
\begin{abstract}%
We study the approximation rates of a class of 
deep neural network approximations 
of operators which arise as 
data-to-solution maps $\cS$ 
of linear elliptic partial differential equations (PDEs),
and act between pairs $X,Y$ of suitable infinite-dimensional spaces.
We prove expression rate bounds for
approximate neural operators $\cG$ with the structure
$\cG = \cR \circ \cA \circ \cE$, with linear encoders $\cE$ and decoders $\cR$.
We focus in particular on deepONets emulating the coefficient-to-solution maps for 
elliptic PDEs set in polygons and in some polyhedra.
Exploiting the regularity of the solution sets of elliptic PDEs in polytopes, 
we show algebraic rates of convergence for problems with data with finite
regularity, and exponential rates for analytic data.
\end{abstract}
%
\section{Introduction}
\label{sec:Intro}
Neural Networks (NNs) have been shown to exhibit 
approximation properties which are at least on par with all ``traditional''
approximation architectures. 
Accordingly, deep NNs
have recently been leveraged for the numerical approximation of PDE 
solutions.  
We mention only 
``Physics-informed NNs'' and their variants,
``Deep Ritz Methods'' and ``Deep Least Squares'',
see \cite{DM24_1088} and the references there 
for a taxonomy of NN based PDE approximations.
In these methodologies, 
for a given set of data
the PDE solutions are numerically approximated
by NNs, with NN training corresponding to
numerical minimization of suitable loss functionals based on
suitable residuals of the PDE of interest.

More recent approaches aim at learning
NN \emph{surrogates of data-to-solution maps $\cS$ for PDEs}.
These methodologies are broadly referred to as
\emph{neural operators} (NOs).
The state of the art of the use of 
neural operators in SciML at the time of this writing
is surveyed in \cite{kovachki2024operator,DM24_1088}
and in the references there.

The present paper is devoted to the
expression rate analysis
of a class of neural operators for the emulation of the (nonlinear) 
coefficient-to-solution maps $\cS:\cK\to Y$ for boundary value problems for
linear, self-adjoint second order elliptic differential operators.
Here,  $\cK$ is a compact subset of the space
of admissible PDE input data,
and $Y$ is a Hilbert space where suitable weak formulations
of the PDE admit a unique solution.
Specifically, we prove existence of approximating, finite-parametric
neural operators $\{\calG_\epsilon\}_{\epsilon > 0}$ of accuracy $\epsilon\in(0,1)$, 
with an encoder-approximator-decoder structure (see \eqref{eq:G=RAE} ahead).
We bound the size and depth of NNs forming the operator with respect to the
worst-case error $\epsilon \in (0,1)$ over the data set $\cK$.
When the domain where the PDE is set is polytopal, 
we show algebraic and exponential expression rates of $\calG_\epsilon$ 
for PDEs with input data with, respectively, finite and analytic regularity.
The intent of this paper is to derive upper bounds for the approximation rates.
A construction is introduced, but it is used as a theoretical tool, rather
than being proposed as a practical tool to be used in computations. In
practice, neural networks are trained with, e.g., gradient descent based on an
empirical loss function computed on finite training data, introducing 
training and generalization errors which we do not consider here.
\subsection{Existing Results}
\label{sec:ExRes}
For the approximation of $\cS$ 
by NOs a first, basic question is \emph{universal approximation}: will NOs,
with increasing size, be able to capture the data-to-solution map $\cS$ 
in norms relevant for the physical problem
to any prescribed accuracy $\epsilon>0$?
For many NOs on many different types of function spaces, universal approximation
has been proved at this point. 
We refer to
\cite{ChenChen1993,schwab2023deep,kratsios2023approximation,furuya2023globally,lanthaler2023nonlocal}
and the references there.

A different line of works addresses, in, to some extent, rather specific settings,
quantitative bounds on the approximation rate of NOs.
As a rule, to derive these approximation rate bounds,
stronger assumptions are imposed on some or all of the following items:
(i) input and output regularity, 
(ii) mapping properties of $\cS$,
and 
(iii) architectures.
We refer to \cite{deng2021convergence,SchwabZech,kovachki2021neural,Kovachki2021,lanthaler2021,Kutyniok2022,herrmann2024neural,Lanthaler2023,schwab2023deep,MS2023}, 
and to the survey \cite{kovachki2024operator} 
and the references there.
In \cite{Lanthaler2023}, 
lower bounds for neural operator approximations with PCA-based en- and decoder
have been shown to require input and output regularity,
quantified in terms of decay rates of principal components.

More closely related to the techniques we use here is \cite{Kutyniok2022}.
There, 
the authors consider parametric PDEs, and in particular the
mapping from finitely many parameters to the coefficients of the
solution on a (reduced or finite element) basis that is defined a priori. 
Here, we extend that
analysis and consider infinite dimensional input spaces and 
NN-based basis functions.
It follows that we have to deal with two additional sources of error which
derive from the truncation of the encoded input and from the approximated decoding.

We alert the reader that the architecture \eqref{eq:G=RAE} considered below 
assumes linear decoder $\cR$.
As a result, the approximation properties of the neural operators
are constrained by the \emph{Kolmogorov barrier}, i.e., 
high convergence rates
require that the solution sets can be well approximated by linear spaces.
An emerging field in SciML and model order reduction (MOR)
(see, e.g.,
\cite{Cohen2022,Dalery2023,Ehrlacher2020,Franco_2022,Lee2020,Regazzoni2024,Rim2023,Seidman2022})
has its focus on operator
surrogates with nonlinear decoders, suited for problems with slowly decaying Kolmogorov $N$-widths.  
See the definition in \eqref{eq:nwidth} below. 
\subsection{Contributions}
\label{sec:Contr}
The main contribution of this paper is in Section~\ref{sec:polytopes}, where
we consider the approximation of the
coefficient-to-solution map $\cS: X\to Y$
of a linear, second order elliptic,
divergence-form PDE in a bounded polytopal domain.
We establish 
the existence of neural operators $\cG_\epsilon$
which approximate $\cS$ on compact subsets of data
to any target accuracy $\epsilon\in (0, 1)$ in the norm of the space
$C(X, Y)$ of continuous data-to-solution maps for the PDE.
The neural operators analyzed have an encoder-approximator-decoder
architecture, 
with the approximation and decoding maps involving fully connected, 
feedforward NNs, see \eqref{eq:deepONet-intro}.
In terms of the number of neurons in $\cG_\epsilon$,
we show algebraic emulation rates when the
data has finite regularity (see Section \ref{sec:algconv}) 
and exponential emulation rates
when the data is analytic (see Section \ref{sec:ExpCnvAnDat}). 
In both cases, we exploit the structure of the solutions to
elliptic PDEs in polytopes and convergence rates for their approximation by
piecewise polynomials.
\subsection{Notation}
\label{sec:Notn}
We collect symbols and notation to be used throughout the rest of this paper.
\subsubsection{General Notation}
\label{sec:NotGen}
We denote $\N =\{1, 2, 3, \dots\}$ and write $\N_0 = \{0\}\cup \N$. 
As usual, $\R$ and $\C$ shall denote real and complex numbers.

For an integer $k\geq 2$, 
we denote the set $\{ n\in \N: n\geq k \}$ as $\N_{\geq k}$.
With a basis $\Phi_M = \{\phi_1, \dots, \phi_M\}$ 
for the $M$-dimensional space $\spn\{\phi_1, \dots, \phi_M\}$,
we denote its 
analysis operator 
$\analysis^{\Phi_M} : \spn(\Phi_M) \to \R^M$, i.e.
\begin{equation*}
  \analysis^{\Phi_M} : v \mapsto \{c_1, \dots, c_M\}, 
  \qquad \text{if}\qquad v = \sum_{i=1}^M c_i \phi_i.
\end{equation*}
Rectangular matrices and, more generally, 
$k$-arrays of tensors of order $k\geq 2$
with real-valued entries shall be denoted by boldface letters: 
$\bD \in \R^{n_1\times n_2\times ... \times n_k}$.
Given $\bc \in \R^M$ and $\bC\in\R^{N\times M}$, 
we denote $\bc\cdot\Phi_M = \sum_{i=1}^Mc_i \phi_i$ 
and write
\begin{equation*}
 \bC\Phi_M  = \left\{ \sum_{i=1}^M\bC_{1i}\phi_i, \dots, \sum_{i=1}^M\bC_{Ni}\phi_i\right\}.
\end{equation*}

For normed spaces $X$ and $Y$ 
and continuous $\mathcal{T} : X \to Y$ bounded on $\cK\subset X$,
we write
\begin{equation*}
\| \mathcal{T}  \|_{L^\infty(\cK; Y)} = \sup_{f\in \cK} \| \mathcal{T}(f) \|_Y.
\end{equation*}

The Euclidean scalar product of two vectors $x,y\in \R^d$ 
shall be denoted with $x\cdot y = x^\top y$. 
The corresponding Euclidean vector norm on $\R^d$ is 
denoted with $\| \cdot \|_{\ell_2}$, i.e. $\|x\|^2_{\ell_2} = x\cdot x$.
For a matrix $\bA \in \R^{N\times M}$, we denote by $\vec(\bA) \in \R^{NM}$
its reshaping into a vector with entries
\begin{equation*}
  \vec(\bA)_{i + N(j-1)} = \bA_{i, j}, \qquad (i, j) \in \{1, \dots, N\} \times \{1, \dots, M\}.
\end{equation*}
We also write
\begin{equation*}
  \|\bA \|_0 = \left| \{( i, j ): \bA_{ij}\neq 0\}\right|,
\end{equation*}
with $| \cdot|$ denoting the cardinality of a set.
Given $\Omega\in \R^d$, a continuous function $a:\Omega\to\R$, and 
points $\bX = \{\bx_1, \dots, \bx_M\}$ in $\Omega$, we write $a(\bX)=
\{a(\bx_1), \dots, a(\bx_M)\}$.
For Banach spaces $Y,Z$, we denote by $\cL(Y,Z)$ 
the set of bounded, linear maps $L:Y\to Z$.
\subsubsection{Function spaces}
\label{sec:Spces}
Let $\Omega\subset \R^d$ be a bounded domain,
with Lipschitz boundary $\partial \Omega$.
For $m\in \N_0$ and $p\in [1,\infty]$,
we use the standard notation $W^{m, p}(\Omega)$ 
for the Sobolev space of order $m$ and Lebesgue summability $p$ of
functions defined on $\Omega$, with the 
shorthand $H^m(\Omega)=W^{m.2}(\Omega)$. 
For a Banach space $X$, we denote by $X'$ its topological dual.

All Banach spaces under consideration will be over the reals; for
a Banach space $X$ over $\R$, we define $X^{\mathbb{C}} = X \otimes \{ 1, i\}$
its ``complexification'', which we assume 
equipped with a norm $\| \cdot \|_{X^{\mathbb{C}}}$
extending $\| \cdot \|_{X}$ (see, e.g., \cite{MunozCplxB}).
%
\subsubsection{Neural Networks}
\label{sec:NNs}
For integers
$L\in \N$ 
and 
$\{n_0, \dots, n_{L}\} \in \N^{L+1}$, 
we define a deep neural network (NN)
of depth $L$ and 
widths $\bn = \{n_0, \dots, n_{L}\}$
as a finite list of weight matrices and bias vectors, 
i.e.,
\begin{equation}
  \label{eq:DefNN}
  \Phi = (A_\ell, b_\ell)_{\ell=1}^L \in
  \bigtimes_{\ell=1}^L (\R^{n_{\ell}\times n_{\ell-1}} \times \R^{n_\ell}) \eqqcolon \NN_{L, \bn}.
\end{equation}
We write, 
for $\Phi$ in $\NN_{L,\bn}$,
\begin{equation*}
\depth(\Phi) = L, \qquad 
\size(\Phi) = \sum_{\ell=1}^L\left(\| A_\ell\|_0 + \|b_\ell\|_0 \right).
\end{equation*}
When $L\geq 2$,
for given activation functions
$\rho_i :\R\to\R $, $i=1,\dots, L-1$
(with the convention of acting on vector valued inputs component-wise),
we associate to the NN $\Phi$ 
and the list $\brho = (\rho_i)_{i=1}^{L-1}$ of activation functions,
the \emph{realization of the NN $\Phi\in \NN_{L, \bn}$} 
\begin{equation*}
  \Realiz_{\brho}(\Phi) =
  T_L \circ \rho_{L-1} \circ T_{L-1} \circ\dots \circ \rho_1 \circ T_1
\end{equation*}
where 
\begin{equation*}
T_\ell : x\mapsto A_\ell x+b_\ell\text{ for }\ell\in\{1, \dots, L\}.
\end{equation*}
Vector-valued realization maps
shall be denoted by a boldface symbol, e.g., $\bRealiz(\Phi)$.

If all activation functions coincide and equal, e.g., some function $\rho$,
i.e., if
$\rho_i = \rho$ for all $i=1, \dots, L-1$, we write $\Realiz_\rho$ for $\Realiz_{\brho}$.
When $\Phi = (A, b)\in \NN_{1, \bn}$ for any $\bn\in\N^2$, 
we simply write
$\Realiz(\Phi): x\mapsto Ax+b$.
The activation functions used in this paper will be 
any one of the following 
\begin{equation*}
\ReLU^r:x\mapsto \max(x, 0)^r,
\end{equation*}
where $r \in \{1,2\}$ and we identify $\ReLU = \ReLU^1$.
Whenever we make generic statements valid for all these activations, 
or when the choice of activation $\rho$ is clear from the context,
we write $\Realiz(\Phi)$. 
All NNs in this work are feedforward NNs.
\subsubsection{Neural Operators}
\label{sec:neural-op-intro}
Neural operators are finite-parametric, 
computational approximations of 
continuous maps $\cS: X\to Y$ 
between Banach spaces $X$ and $Y$.

Specifically,
we consider the approximation of $\cS$ 
by
neural operators $\cG: X\to Y$
    which can be written as 
composition of three operators:
encoder $\cE$, approximator $\cA$, and decoder $\cR$, i.e., 
\begin{equation}
\label{eq:G=RAE}
  \cG = \cR \circ \cA \circ \cE,
\end{equation}
where, for suitable (eventually accuracy-dependent) dimensions $n, m\in \N$, 
\begin{equation*}
  \cE : X \to \R^m, \quad \cA: \R^m\to\R^n, \quad \cR: \R^n\to Y.
\end{equation*}
The structure \eqref{eq:G=RAE} was recently considered, e.g., 
in 
\cite{lanthaler2021,Lanthaler2023,herrmann2024neural,lanthaler2023nonlocal,lanthaler2024parametric}.

In the architecture \eqref{eq:G=RAE}, 
$\cA\circ\cE$ and $\cR$ are correspond, respectively, to the
``trunk'' net and ``branch'' net of the so-called ``deepONet'' 
architecture introduced in \cite{lu2020deeponet}. 
DeepONets are neural operators with the structure \eqref{eq:G=RAE}
where the (linear) decoder is given by
\begin{equation}
\label{eq:Fdec}
  \cR : \bx \mapsto \bx\cdot \bRealiz(\Phirec),
\end{equation}
where $\Phirec$ is a feedforward neural network, the encoder corresponds to
point evaluations on a set of points $\bX$, and the approximator $\cA$ is the
realization of a NN $\Phiappx$. 
This gives neural operators $\cG$ defined on subsets of continuous functions such that
\begin{equation}
  \label{eq:deepONet-intro}
  \cG : a\mapsto
  \bRealiz\left( \Phiappx \right)(a(\bX))\cdot\bRealiz(\Phirec).
\end{equation}

\subsubsection{Distance between sets and spaces and Kolmogorov $N$-widths}
\label{sec:Nwidths}
For a normed, linear space $X$ with norm $\| \cdot \|_X$ 
and for a subset $K\subset X$, 
for $N=1,2,...$ 
Kolmogorov $N$-widths are defined as 
\begin{equation}
  \label{eq:nwidth}
  d_N( K , X) = \inf_{\dim(V) = N} \dist_X(K, V) 
\;.
\end{equation}
Here,
\begin{equation*}
  \dist_X(K, V) = \sup_{x\in K}\dist_X(x, V) = \adjustlimits\sup_{x\in K}\inf_{v\in V} \| x-v\|_X
\end{equation*}
and the infimum in \eqref{eq:nwidth} is taken over
all $N$-dimensional linear subspaces $V$ of $X$. 
We recall here a result on transformation of 
Kolmogorov $N$-widths with algebraic decay under holomorphic mappings. 
This will be used to show Theorem \ref{th:ON-input-Nwidth}, 
under the additional assumption that the data-to-solution operator $\cS$ 
admits a holomorphic extension to the complex domain.
\begin{lemma}[{\cite[Theorem 1.1]{Cohen2016}}]
  \label{lemma:Nwidth-holomorphic-general}
 Let $X,Y$ be complex Banach spaces, $O\subset X$ open, $K\subset O$ compact,
 and let $\cG$ be a holomorphic mapping from $O$ to $Y$ which is uniformly
 bounded on $O$. 
 If there exists $s>0$ such that 
 \begin{equation*}
   \sup_{N\in\N} N^s d_N(K, X) < \infty,
 \end{equation*}
 it holds for any $t<s-1$
 \begin{equation*}
   \sup_{N\in\N} N^td_N(\cG(K), Y) < \infty.
 \end{equation*}
\end{lemma}

\subsection{Layout} 
\label{sec:outline}
In Section~\ref{sec:PrbForm}, we introduce an abstract operator equation
with parametric operators, modeled on the simple linear, second order 
divergence-form PDE. 
In particular, we deal with 
boundary value problems for variational, elliptic self-adjoint differential 
operators with nonconstant coefficients. 

Section~\ref{sec:it} recalls Richardson iterations, for
the computation of finite-parametric approximations of the solutions.
These iterations are used in Section~\ref{sec:GenBounds} 
to derive error bounds for the approximation part of the
neural operator with architecture given by \ref{eq:G=RAE}.
The results are abstract at this stage, applying
to general, variational formulations of elliptic PDEs.

Section~\ref{sec:polytopes} addresses the specific setting of linear, 
second order elliptic PDEs in polygons and in some polyhedra.
We consider neural operators with the deepONet architecture \eqref{eq:deepONet-intro}
We consider two separate cases:
data with finite regularity and analytic data. 
We show, respectively, algebraic and exponential approximation rates.
\section{Forward Problem Formulation}
\label{sec:PrbForm}
We introduce a variational formulation of the parametric forward problems
whose data-to-solution maps are
to be subsequently emulated by the operators
$\calG$ as in \eqref{eq:G=RAE}. 
We illustrate the scope of problems with several examples.
\subsection{Variational Formulation. Existence and Uniqueness}
\label{sec:VarFrm}
For the data space $X$, a real Banach space, 
and a solution space $Y$ which is a real Hilbert space, 
consider
for each $a\in X$, a self-adjoint operator $L(a)\in \cL(Y,Y')$. 
For all $a\in X$ and all $u, v\in Y$, 
with $\langle \cdot,\cdot \rangle$
denoting the $Y'\times Y$ duality pairing,
we introduce the bilinear form
\begin{equation}\label{eq:Defb}
  \frab(a; u, v) \coloneqq \langle L(a)u, v \rangle.
\end{equation}
Self-adjointness of $L(a)$ implies symmetry of $\frab(a;.,.)$.
We fix a nominal input $a_0\in X$ (with, for ease of notation, $\|a_0\|_X=1$) 
such that $L(a_0)$ is positive definite 
(sufficient conditions
will be provided with the set $\cDab\subset X$ in \eqref{eq:cDab} below).
We accordingly may 
associate with this $a_0$ the ``energy norm'' $\| \cdot \|_Y$
given by
\begin{equation}\label{eq:a0}
\| u \|^2_Y \coloneqq \frab(a_0; u, u), \;\; u\in Y\;.
\end{equation}
We suppose that 
the map $a\mapsto L$ is linear from $X$ to $\mathcal{L}(Y, Y')$ 
and
that
\begin{equation}
  \label{eq:bilinear-cont}
  | \frab(a; u, v) | \leq \|a\|_X\|u\|_Y\|v\|_Y \qquad \forall a\in X,\, u, v\in Y.
\end{equation}
For constants $\alpha, \beta\in \R$ such that $0<\beta<\alpha<\infty$, 
let the set $\cDab \subset X$ 
of admissible data be given by
\begin{multline}
  \label{eq:cDab}
  \cDab \coloneqq \big\{ a \in X: (\alpha-\beta) \|w\|^2_Y 
  \leq 
  \frab(a; w, w), \\
  |\frab(a; v, w)|  \leq (\alpha+\beta) \|v \|_Y \|w\|_Y, \, \forall v,w\in Y \big\}.
\end{multline} 
Fix $0\ne f\in Y'$; 
for each $a\in \cDab$, 
define $u^a \in Y$ to be the weak solution to
\begin{equation}
  \label{eq:problem}
  \frab(a; u^a, v) =  \langle f, v \rangle\qquad \forall v\in Y.
\end{equation}
Note that, by definition, $\alpha a_0 \in \cDab$.
\begin{remark}
  \label{remark:example}
  As an example of this setting, consider, for $\Omega\subset\R^d$ the spaces
  $X=L^\infty(\Omega)$ and $Y=H^1_0(\Omega)$ and the elliptic PDE
  \begin{equation}
    \label{eq:example-PDE}
    -\nabla \cdot(a\nabla u) =f
  \end{equation}
  with homogeneous Dirichlet boundary conditions on $\partial\Omega$.
  This is the problem that will be analyzed in Section \ref{sec:polytopes}.
\end{remark}
\subsection{Data-to-Solution Map $\cS$}
\label{sec:Dat2Sol}
The definition \eqref{eq:cDab} and the Lax-Milgram Lemma imply 
the unique solvability of \eqref{eq:problem} for inputs from $\cDab$.
Hence, 
the (nonlinear) data-to-solution operator
\begin{equation}
  \label{eq:solop}
  \cS :
  \begin{cases}
    \cDab \to Y   \\
    a \mapsto u^a,
  \end{cases}
\end{equation}
is well-defined, with image $\cU = \cS(\cDab)\subset Y$.

It follows from \cite[Lemma B.1]{MS2023} that,
for any $0<\beta<\alpha<\infty$, the map $\cS$ is Lipschitz in $\cDab$, 
i.e., that there exists a constant $\Clip>0$ (depending on $\alpha, \beta, f$)
such that, for all $a_1,a_2\in\cDab$,
\begin{equation}
  \label{eq:S-Lip}
  \| \cS(a_1) - \cS(a_2) \|_Y \leq  \Clip\| a_1 - a_2 \|_X.
\end{equation}
For compact subsets $ \cKab \subset \cDab \subset X$, 
the sets $\cU = \cS({\cKab})$ are compact in $Y$.
\subsection{Galerkin Discretization}
\label{sec:GalDis}
To build numerically accessible neural operators that approximate
the coefficient-to-solution map $\cS$, 
we need to restrict $\cS$ to finite-parametric outputs. 
One way to achieve this is via Galerkin projection.

For any $ Y_N \subset Y$, 
$N$-dimensional subspace of $Y$,
we introduce the 
\emph{approximate data-to-solution operator} $\cS^{Y_N} : \cDab \to Y_N$ 
via
\begin{equation*}
  \cS^{Y_N} : v \mapsto u^v_N\quad\text{where $u^v_N\in Y_N$ is defined by }
  \frab(v;u^v_N, w_N) = \langle f, w_N\rangle, \; \forall w_N\in Y_N.
\end{equation*}
The Lipschitz bound in \eqref{eq:S-Lip} also holds true for $\cS^{Y_N}$,
\emph{with Lipschitz constant independent of $N$}.

The coefficient-to-solution operator in the abstract setting 
\eqref{eq:bilinear-cont} -- \eqref{eq:problem} 
is bounded and the bilinear form is continuous, 
in the norm $\| \cdot \|_Y$.
\begin{lemma} 
\label{lem:Coef2Sol}
For \eqref{eq:Defb}--\eqref{eq:problem},
  the following bounds hold:
  \begin{equation}
    \label{eq:Snorm}
    \| \cS \|_{L^\infty(\cDab; Y)} \leq \frac{\|f\|_{Y'}}{\alpha-\beta}.
  \end{equation}
  and, for all $a\in\cDab$ and all $u, v\in Y$, with $a_0\in \cDab$
  as in \eqref{eq:a0}
  \begin{equation}
    \label{eq:bound-bilinear}
    \left|\frab(a-\alpha a_0; u, v)\right|\leq \beta \|u\|_Y \|v\|_Y.
  \end{equation}
  \end{lemma}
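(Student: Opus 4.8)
The plan is to prove the two bounds in Lemma~\ref{lem:Coef2Sol} by elementary arguments: \eqref{eq:Snorm} follows from coercivity applied to the weak formulation \eqref{eq:problem}, and \eqref{eq:bound-bilinear} follows by splitting $\frab(a-\alpha a_0;u,v)$ into the difference of the two bounds characterizing $\cDab$ in \eqref{eq:cDab}, exploiting the definition \eqref{eq:a0} of $\|\cdot\|_Y$ and linearity of $a\mapsto L(a)$.

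\emph{Proof of \eqref{eq:Snorm}.} Fix $a\in\cDab$ and let $u^a=\cS(a)$. Testing \eqref{eq:problem} with $v=u^a$ gives $\frab(a;u^a,u^a)=\langle f,u^a\rangle\le \|f\|_{Y'}\|u^a\|_Y$. On the other hand, the coercivity condition in \eqref{eq:cDab} gives $(\alpha-\beta)\|u^a\|_Y^2\le\frab(a;u^a,u^a)$. Combining the two inequalities and dividing by $\|u^a\|_Y$ (the case $u^a=0$ being trivial) yields $\|u^a\|_Y\le \|f\|_{Y'}/(\alpha-\beta)$; taking the supremum over $a\in\cDab$ gives \eqref{eq:Snorm}.

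\emph{Proof of \eqref{eq:bound-bilinear}.} By linearity of $a\mapsto L(a)$ and hence of $a\mapsto\frab(a;\cdot,\cdot)$ in its first argument, and by \eqref{eq:a0}, for all $u,v\in Y$ we have $\frab(a-\alpha a_0;u,v)=\frab(a;u,v)-\alpha\,\frab(a_0;u,v)$. The task is to bound this by $\beta\|u\|_Y\|v\|_Y$. First note that the polarization-type symmetry of $\frab(a_0;\cdot,\cdot)$ together with \eqref{eq:a0} gives, by Cauchy--Schwarz for the inner product induced by $a_0$, $|\frab(a_0;u,v)|\le\|u\|_Y\|v\|_Y$, so $\|\alpha a_0-a\|$-type quantities are controlled. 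More directly, one argues as follows: the bilinear form $\frab(a;\cdot,\cdot)$ is, for $a\in\cDab$, bounded above by $(\alpha+\beta)\|u\|_Y\|v\|_Y$ and below (on the diagonal) by $(\alpha-\beta)\|w\|_Y^2$, while $\alpha\,\frab(a_0;\cdot,\cdot)=\alpha\langle\cdot,\cdot\rangle_Y$ has both bounds equal to $\alpha\|u\|_Y\|v\|_Y$ on the diagonal. Hence the symmetric bilinear form $q(u,v):=\frab(a;u,v)-\alpha\frab(a_0;u,v)$ satisfies $-\beta\|w\|_Y^2\le q(w,w)\le\beta\|w\|_Y^2$ for all $w\in Y$ (from the diagonal versions of the two bounds in \eqref{eq:cDab}); since $q$ is symmetric (both $L(a)$ and $L(a_0)$ are self-adjoint), the standard fact that a symmetric bilinear form bounded by $\beta\|\cdot\|_Y^2$ on the diagonal is bounded by $\beta\|u\|_Y\|v\|_Y$ off-diagonal (via the parallelogram identity applied to $q(u\pm v,u\pm v)$) yields \eqref{eq:bound-bilinear}.

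The main subtlety is the last step of \eqref{eq:bound-bilinear}: passing from the diagonal two-sided bound $|q(w,w)|\le\beta\|w\|_Y^2$ to the off-diagonal bound $|q(u,v)|\le\beta\|u\|_Y\|v\|_Y$. This requires $q$ to be symmetric, which is exactly why self-adjointness of $L(a)$ (hypothesized in Section~\ref{sec:VarFrm}) is used, and then the elementary identity $q(u,v)=\tfrac14\big(q(u+v,u+v)-q(u-v,u-v)\big)$ together with $\|u+v\|_Y^2+\|u-v\|_Y^2=2\|u\|_Y^2+2\|v\|_Y^2$ gives $|q(u,v)|\le\tfrac\beta4(\|u+v\|_Y^2+\|u-v\|_Y^2)=\tfrac\beta2(\|u\|_Y^2+\|v\|_Y^2)$; rescaling $u\mapsto tu$, $v\mapsto v/t$ and optimizing over $t>0$ upgrades this to the sharp product bound $\beta\|u\|_Y\|v\|_Y$. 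Everything else is a direct substitution; no compactness, regularity, or $N$-width input is needed here. $\qed$
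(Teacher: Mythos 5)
Your proof of \eqref{eq:Snorm} is correct and uses exactly the elementary coercivity/duality argument the paper implicitly refers to by calling it ``classical''; nothing to add there.

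Your proof of \eqref{eq:bound-bilinear} is correct and is, in fact, a more careful version of the paper's argument. Both you and the paper use the same ingredients: linearity of $a\mapsto L(a)$, self-adjointness of $L$ (so that $q(u,v):=\frab(a-\alpha a_0;u,v)$ is a symmetric bilinear form), the two-sided diagonal bound $|q(w,w)|\le\beta\|w\|_Y^2$ obtained from \eqref{eq:cDab} and \eqref{eq:a0}, and a polarization identity. Where you differ is in the upgrade from the diagonal bound to the off-diagonal one. The paper's chain reads
\[
2|q(u,v)| = \bigl| q(u-v,u-v) - q(u,u) - q(v,v) \bigr|
\overset{\eqref{eq:cDab}}{\le}
\beta\bigl| \frab(a_0;u-v,u-v) - \frab(a_0;u,u) - \frab(a_0;v,v) \bigr|
= 2\beta|(u,v)_Y|,
\]
and then finishes with \eqref{eq:bilinear-cont}. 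The middle inequality, however, does not follow from the termwise bounds $|q(w,w)|\le\beta\frab(a_0;w,w)$: an estimate $|A-B-C|\le\beta|A'-B'-C'|$ cannot be deduced from $|A|\le\beta A'$, $|B|\le\beta B'$, $|C|\le\beta C'$. Read literally, that step would prove the stronger bound $|q(u,v)|\le\beta|(u,v)_Y|$, which is false in general (take $u\perp v$ in $(\cdot,\cdot)_Y$ with $q(u,v)\neq 0$). Your route avoids this: the identity $q(u,v)=\tfrac14\bigl(q(u+v,u+v)-q(u-v,u-v)\bigr)$, the diagonal bound on both terms, the parallelogram law $\|u+v\|_Y^2+\|u-v\|_Y^2=2\|u\|_Y^2+2\|v\|_Y^2$, and the final optimization over $u\mapsto tu$, $v\mapsto v/t$ together constitute the standard, airtight upgrade of a numerical-radius bound on a self-adjoint form to a product bound, and they yield precisely \eqref{eq:bound-bilinear}. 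In short: same ingredients, same conclusion, but your assembly is rigorous whereas the paper's middle step, as stated, is a gap.
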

  \begin{proof}
    The first statement is classical; for the second, bound
    \begin{align*}
      2\left|\frab(a-\alpha a_0; u, v)\right| &=
      \left|
      \frab(a-\alpha a_0; u-v, u-v)
      -
      \frab(a-\alpha a_0; u, u)
      -
      \frab(a-\alpha a_0; v, v)
      \right|
                                                \\
      \overset{\eqref{eq:cDab}}&{\leq}
                                 \beta\left| \frab(a_0; u-v, u-v )
                                 -
                                 \frab(a_0; u, u)
                                 -
                                 \frab(a_0; v, v)
                                 \right|
      \\
      \overset{\eqref{eq:bilinear-cont}}&{ \leq} 2 \beta  \| a_0 \|_X \|u\|_Y\|v\|_Y
                                          = 2 \beta   \|u\|_Y\|v\|_Y.
    \end{align*}
  \end{proof}
\section{Richardson iterations}
\label{sec:it}
We introduce and analyze the iterative scheme that we will
exploit theoretically to derive bounds on the size of the NN in the
approximator network $\cA$ of neural operators $\calG$ in \eqref{eq:G=RAE} 
emulating the data-to solution map $\cS$
of the problem described in Section \ref{sec:PrbForm}.
In this section,
we denote
by $\hPsi_N =\{\hpsi_1, \dots, \hpsi_N\}$
a set of 
orthonormal functions in $Y$ (with respect to the scalar product of $Y$),
and by $Y_N = \spn\hPsi_N$.
\subsection{Variational Formulation}
\label{sec:RichVarFrm}
The Richardson iteration
(see, e.g., \cite{Orszag1980})
is obtained by iteratively solving, for $k = 0, 1, 2, \dots$: 
find $\uvNkpone\in Y_N$ such that
\begin{equation}
  \label{eq:iteration}
   \begin{multlined}
  \frab(a_0;\uvNkpone, w_N) \\
  = \frab(a_0;\uvNk, w_N)-\frac{1}{\alpha}\left(\frab(v;\uvNk, w_N) - (f, w_N)  \right) 
  \quad \forall w_N\in Y_N.
\end{multlined}
\end{equation}
%
We recall classical results on the  convergence and stability of the iterates,
to derive explicit constants in our setting.
\begin{proposition}
  \label{prop:iterative}
  Let $0<\beta<\alpha<\infty$ and choose 
  $u^{v, (0)}_N = \frac{1}{\alpha}\cS^{Y_N} (a_0)$.
  For any $v\in \cDab$, let $\uvNk$ be the iterates obtained
  with \eqref{eq:iteration}.
  Then, for all $k\in \N$, 
\begin{equation}
    \label{eq:iterative-bounds-a}
    \| \uvNk - \cS^{Y_N}(v) \|_Y \leq \frac{1}{\alpha-\beta} \left(\frac{\beta}{\alpha}  \right)^{k+1}\| f\|_{Y'}
\end{equation}
and
\begin{equation}
    \label{eq:iterative-bounds-a2}
  \| \uvNk\|_Y \leq \frac{1}{\alpha-\beta} \|f\|_{Y'}.
\end{equation}
\end{proposition}
\begin{proof}
  We start by proving \eqref{eq:iterative-bounds-a}.
  For all $v\in \cDab$, from \eqref{eq:iteration} we have that
  \begin{align*}
    \| \uvNkpone - \cS^{Y_N}(v) \|_Y
    &\leq \sup_{ w_N\in Y_N : \|w_N\|_Y=1} \left| \frab(a_0- v/\alpha; \uvNk - \cS^{Y_N}(v), w_N) \right|
    \\
    & \leq \frac{\beta}{\alpha} \| \uvNk - \cS^{Y_N}(v) \|_Y.
  \end{align*}
  Iterating the inequality above, hence,
  \begin{equation}
    \label{eq:Richardson-err}
    \| \uvNk - \cS^{Y_N}(v) \|_Y \leq \left( \frac{\beta}{\alpha} \right)^k \| u^{v, (0)}_N - \cS^{Y_N}(v) \|_Y.
  \end{equation}
  Noting in addition that
  \begin{align*}
    \| \cS^{Y_N}(v) -  \frac{1}{\alpha}\cS^{Y_N}(a_0)  \|_{Y}
    &\leq \sup_{w\in Y_N: \|w\|_Y=1} \left| \frab(a_0;\cS^{Y_N}(v) -  \cS^{Y_N}(a_0)/\alpha , w ) \right|\\
    & = \sup_{w\in Y_N: \|w\|_Y=1} \left| \frab(a_0  - v/\alpha;\cS^{Y_N}(v)  , w ) \right|\\
    \overset{\eqref{eq:bound-bilinear}}&{\leq} 
    \frac{\beta}{\alpha} \| \cS^{Y_N}(v) \|_Y
    \overset{\eqref{eq:Snorm}}{\leq} \frac{\beta}{\alpha}\frac{\|f\|_{Y'}}{\alpha-\beta},
  \end{align*}
  we obtain \eqref{eq:iterative-bounds-a}.
  Finally, by the same argument,
  \begin{equation*}
    \| \uvNkpone \|_Y \leq \frac{\beta}{\alpha}\|\uvNk\|_Y  +  \frac{1}{\alpha}\|f\|_{Y'}
  \end{equation*}
  and, since $\|u^{v,(0)}_N\|_Y = \|f\|_{Y'}/\alpha$,
  \begin{equation*}
    \| \uvNk \|_Y \leq \frac{1}{\alpha}\sum_{j=0}^{k+1} \left( \frac{\beta}{\alpha} \right)^{j}\|f\|_{Y'}\leq \frac{1}{\alpha-\beta}\|f\|_{Y'}
  \end{equation*}
  which is \eqref{eq:iterative-bounds-a2}.
\end{proof}

\begin{remark} 
\label{rem:RichItRef}
  Richardson iterations or similar techniques have been 
  used as an approximation tool, e.g., 
  in the context of the NN-based emulation of
  operators \cite{Kovachki2021,Kutyniok2022} and for tensor rank
  bounds \cite{Kressner2016}. 
  We have presented the
  iteration here for completeness and to obtain the precise
  estimates needed for the following.
\end{remark}
\subsection{Matrix Form}
\label{sec:RichMatFrm}
Our theoretical bounds on the size of the approximation network
will be based on the matrix form of \eqref{eq:iteration}.
We denote by 
$\hbff_N$ the vector with
entries $\left[ \hbff_N \right]_i = \langle f, \hpsi_i\rangle$ 
and by 
$\hbB_N^{v}$ the matrix with 
entries
\begin{equation}
  \label{eq:Bmat}
\left[\hbB^v_N\right]_{ij} = \frab(v;\hpsi_j, \hpsi_i), \quad\text{for }i, j \in \{1, \dots,
N\}.
\end{equation}
Applying the analysis operator
\begin{equation}\label{eq:AnOp}
  \analysis^{\hPsi_N}:
  \begin{cases}
  Y_N \to \R^{N} \\ u^{v, (k)}_N \mapsto \bck_N \;,
  \end{cases}
\end{equation}
to both sides of \eqref{eq:iteration}, 
the Galerkin equation \eqref{eq:iteration} takes the algebraic form
\begin{equation}
  \label{eq:iteration-alg-z}
  \bckpone_N = \bck_N - \frac{1}{\alpha}\left( \hbB_N^{v}\bck_N - \hbff_N\right).
\end{equation}
From \eqref{eq:iterative-bounds-a2}
it follows that, for all $k\in \N$,
\begin{equation}
    \label{eq:iterative-bounds-b}
 \|   \bck_N \|_{\ell^2}  = \| u_N^{v,(k)} \|_Y 
 \leq 
 \frac{1}{\alpha-\beta} \|f\|_{Y'}.
\end{equation}
\section{General bounds on the approximation network}
\label{sec:GenBounds}
%

As in the previous section, we denote also here,
by
$\hPsi_N = \{\hpsi_1, \dots, \hpsi_N\}$, a set of orthonormal
functions in $Y$ and by $Y_N = \spn\hPsi_N$.
\subsection{Approximation of the iterative scheme with NNs}
\label{sec:approximation-op}
We consider here the approximation part $\cA$ (see Section \ref{sec:neural-op-intro})
of the neural operator $\cG$ in \eqref{eq:G=RAE} 
and derive estimates exploiting the iteration in Section \ref{sec:it}. 
\begin{lemma}
  \label{lemma:Phiinput}
  Let $N, M\in \N$, $\alpha\in\R$ and $\Xi = \{\xi_1, \dots, \xi_M\}\subset X$. 
  Let $\{\hpsi_1, \dots, \hpsi_N\}\subset Y$ be orthonormal in $Y$ and
  let the matrix $\hbB^v_N$ be defined as in
  \eqref{eq:Bmat}. 
  There exists a depth one NN $\Phiinput$ such that for all $\by \in \R^M$
  \begin{equation*}
    \bRealiz(\Phiinput)(\by) = \vec(\Id_N- \alpha^{-1}\hbB^v_N), 
  \end{equation*}
  with $v = \sum_{k=1}^M y_k \xi_k$. Furthermore,
  $\bRealiz(\Phiinput):\R^{M}\to\R^{N^2}$ and $\size(\Phiinput)\leq N^2M + N^2$. 
\end{lemma}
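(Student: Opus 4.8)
The plan is to observe that, by the standing linearity hypothesis on $a\mapsto L(a)$, the quantity $\vec\bigl(\Id_N-(\alpha\bB^{a_0}_N)^{-1}\bB^v_N\bigr)$ with $v=\sum_{k=1}^M y_k\xi_k$ depends \emph{affinely} on $\by\in\R^M$; the network $\Phiinput$ is then simply the single affine layer that realizes this map, and the size bound follows by counting entries. No nonlinear approximation step is required, so there is no real obstacle beyond bookkeeping; the one place where the hypotheses do work is precisely the linearity of $a\mapsto L(a)$, which collapses the problem to an affine map.

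In detail, I would first note that since $a\mapsto L(a)$ is linear from $X$ into $\mathcal{L}(Y,Y')$, for fixed $u,w\in Y$ the map $a\mapsto\frab(a;u,w)=\langle L(a)u,w\rangle$ is linear, hence each entry $v\mapsto[\bB^v_N]_{ij}=\frab(v;\psi_j,\psi_i)$ is linear in $v$. Therefore $\bB^v_N=\sum_{k=1}^M y_k\,\bB^{\xi_k}_N$ when $v=\sum_{k=1}^M y_k\xi_k$. Since $\bB^{a_0}_N$ is positive definite (as recalled after \eqref{eq:iteration-alg}), $(\alpha\bB^{a_0}_N)^{-1}$ exists, and applying it together with the linear vectorization operator $\vec$ yields
\begin{equation*}
  \vec\bigl(\Id_N-(\alpha\bB^{a_0}_N)^{-1}\bB^v_N\bigr)
  = \vec(\Id_N) - \sum_{k=1}^M y_k\,\vec\bigl((\alpha\bB^{a_0}_N)^{-1}\bB^{\xi_k}_N\bigr).
\end{equation*}

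Finally, I would define $\Phiinput\coloneqq(A,b)\in\NN_{1,\{M,(N+1)^2\}}$ by taking $b\coloneqq\vec(\Id_N)\in\R^{(N+1)^2}$ and letting $A\in\R^{(N+1)^2\times M}$ be the matrix whose $k$-th column is $-\vec\bigl((\alpha\bB^{a_0}_N)^{-1}\bB^{\xi_k}_N\bigr)$; by the displayed identity, $\bRealiz(\Phiinput)(\by)=A\by+b$ equals the asserted expression for every $\by\in\R^M$. Since $A$ has at most $(N+1)^2M$ nonzero entries and $b$ has at most $(N+1)^2$, this gives $\size(\Phiinput)=\|A\|_0+\|b\|_0\le(N+1)^2M+(N+1)^2$, as claimed.
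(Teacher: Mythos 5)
Your proof is correct and takes essentially the same approach as the paper: both exploit the linearity of $a\mapsto L(a)$ (hence of $v\mapsto\bB^v_N$) to show the target map is affine in $\by$, and realize it with a single affine layer whose weight matrix has columns $-\vec\bigl((\alpha\bB^{a_0}_N)^{-1}\bB^{\xi_k}_N\bigr)$ and whose bias is $\vec(\Id_N)$; the paper merely writes this same matrix out entrywise via an auxiliary tensor $\bD$ and its contraction $\bE$.
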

\begin{proof}
  The linearity of the mappings $\by\mapsto  v$ and $v\mapsto \hbB^v_N$ implies
  the existence of the NN, which is a just a linear transformation from $\R^M\to\R^{N^2}$.
  In particular,
  \begin{equation*}
    \Phiinput = \left( (-\alpha^{-1} \left[ \vec\hbB_N^{\xi_1} \dots \vec\hbB_N^{\xi_M} \right], \vec(\Id_N)) \right),
  \end{equation*}
  where  $\left[ \vec\hbB_N^{\xi_1} \dots \vec\hbB_N^{\xi_M} \right]$ is the
  matrix with columns $\vec\hbB_N^{\xi_k}$, $k=1, \dots, M$.
\end{proof}

\begin{lemma}
  \label{lemma:Phistep}
  Let $0<\beta<\alpha<\infty$ and $\rho=\ReLU$.
  Let $\{\hpsi_1, \dots, \hpsi_N\}\in Y^N$ be orthonormal in $Y$ and
  let the matrix $\hbB^v_N$ be defined as in
  \eqref{eq:Bmat}.
  Then, there exists $C>0$ such that, 
  for all $N\in \N$, all $v\in\cDab$, all $\epsilon\in (0,1)$, and for all $Z>0$
  exists a NN $\Phistep_{N, Z, \epsilon}$ 
  such that $\bRealiz_\rho(\Phistep_{N, Z, \epsilon}): \R^{N(N+1)} \to \R^N$ and, for all $\bx\in\R^{N}$ with $\| \bx \|_{\ell^2}\leq Z$,
  \begin{equation*}
  \sup_{v\in\cDab}
  \| \bRealiz_{\rho}(\Phistep_{N, Z, \epsilon})
  \left(\vec(\Id_N - \alpha^{-1}\hbB^{v}_N), \bx\right) 
  -\left[\bx - \alpha^{-1}(\hbB^{v}_N \bx - \hbff_N)  \right]\|_{\ell^2}
  \leq \epsilon,
  \end{equation*}
  with $\rho=\ReLU$, and 
  \begin{align*}
  \depth(\Phistep_{N, Z, \epsilon})&\leq C \left( \left| \log\epsilon  \right| + \log N +
      \log(\max(Z, 1)) \right),\\
  \size(\Phistep_{N, Z, \epsilon})&\leq CN^2\left( \left| \log\epsilon  \right| + \log N +
       \log(\max(Z, 1))\right).
  \end{align*}
  \end{lemma}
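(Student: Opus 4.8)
The plan is to exploit the very simple algebraic form of the target vector, so that only one bilinear operation has to be emulated. First I would rewrite the bracket as
\[
  \bx - (\alpha\bB^{a_0}_N)^{-1}(\bB^{v}_N\bx - \bff_N)
  = \left(\Id_N - (\alpha\bB^{a_0}_N)^{-1}\bB^{v}_N\right)\bx + (\alpha\bB^{a_0}_N)^{-1}\bff_N
\]
and recall, from the proof of Proposition~\ref{prop:iterative}, that $(\alpha\bB^{a_0}_N)^{-1}\bff_N = \analysis^{\Psi_N}\psi_0 = (1,0,\dots,0)^\top =: \bm{e}_1$ is a fixed vector, independent of $v$ and of the argument $\bx$. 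Writing $\bM := \Id_N - (\alpha\bB^{a_0}_N)^{-1}\bB^{v}_N$, the task reduces to building a $\ReLU$ network that, on input $(\vec(\bM),\bx) \in \R^{(N+1)^2 + (N+1)}$, returns $\bM\bx + \bm{e}_1$ up to $\ell^2$-error $\epsilon$; the additive term $\bm{e}_1$ is produced exactly as a bias in the output layer, so the only real work is approximating the matrix--vector product $\bM\bx$.

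Two uniform magnitude bounds are available. By the spectral-norm estimate in the proof of Proposition~\ref{prop:iterative}, $\|\bM\|_2 \le \beta/\alpha < 1$ for every $v\in\cDab$, whence $|\bM_{ij}| \le \beta/\alpha < 1$ for all $i,j$; and by hypothesis $|x_j| \le \|\bx\|_{\ell^2} \le Z$. Thus every scalar product $\bM_{ij}x_j$ to be formed has both factors bounded in modulus by $B := \max(Z,1)$. I would then invoke the standard $\ReLU$ emulation of multiplication: for each $\delta\in(0,1)$ there is a $\ReLU$ network $\Phi_{\mathrm{mult}}$ with $\bRealiz(\Phi_{\mathrm{mult}})(a,0) = \bRealiz(\Phi_{\mathrm{mult}})(0,b) = 0$, with $|\bRealiz(\Phi_{\mathrm{mult}})(a,b) - ab| \le \delta$ for all $|a|,|b|\le B$, and with $\depth(\Phi_{\mathrm{mult}}),\size(\Phi_{\mathrm{mult}}) \le C_0(|\log\delta| + \log B)$ for an absolute constant $C_0$.

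I would assemble $\Phistep_{N,Z,\epsilon}$ from $(N+1)^2$ parallel copies of $\Phi_{\mathrm{mult}}$ (all of equal depth and size, so no padding is needed), precomposed with a sparse linear routing layer that feeds $(\bM_{ij},x_j)$ into the $(i,j)$-th copy, and postcomposed with a linear layer that forms $\sum_{j=0}^{N}\bRealiz(\Phi_{\mathrm{mult}})(\bM_{ij},x_j)$ for each $i$ and adds $\bm{e}_1$ as a bias; both extra layers carry only $O(N^2)$ nonzero weights and add $O(1)$ to the depth. Choosing $\delta := \epsilon\,(N+1)^{-3/2}$, each output component incurs error at most $(N+1)\delta$, so the $\ell^2$-error over the $N+1$ components is at most $(N+1)^{3/2}\delta = \epsilon$, uniformly over $v\in\cDab$ and over $\bx$ with $\|\bx\|_{\ell^2}\le Z$. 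Since $|\log\delta| = |\log\epsilon| + \tfrac32\log(N+1)$ and $B = \max(Z,1)$, the depth and size bounds of $\Phi_{\mathrm{mult}}$ translate, after multiplying the size by $(N+1)^2$ and adding $O(N^2)$, into the asserted bounds, with a constant $C$ depending only on $\alpha,\beta$ (through $C_0$).

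The main obstacle is genuinely just the emulation of $\bM\bx$: a $\ReLU$ network cannot compute products exactly, which is where the logarithmic depth is forced, and one must balance the per-multiplication accuracy $\delta$, the count $(N+1)^2$ of multiplications, and the $\ell^2$-aggregation over the $N+1$ outputs so that the total error is $\epsilon$ at merely logarithmic cost in $N$, $1/\epsilon$ and $Z$. What keeps this clean is Proposition~\ref{prop:iterative} itself: the uniform bound $|\bM_{ij}|\le\beta/\alpha<1$ on $\cDab$ allows the multiplication gadgets to be built on the fixed box $[-B,B]^2$, and the constant contribution is the fixed vector $\bm{e}_1$ rather than a $v$-dependent quantity.
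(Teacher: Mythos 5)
Your proposal is correct and follows essentially the same approach as the paper. The paper's proof is a one‑liner citing the $\ReLU$ matrix‑multiplication network of Kutyniok et al.\ (Proposition 3.7 there), together with exactly the two observations you make: $\|\Id_N-(\alpha\bB^{a_0}_N)^{-1}\bB^v_N\|_2\le\beta/\alpha<1$ (so entries are bounded) and $(\alpha\bB^{a_0}_N)^{-1}\bff_N$ is a fixed vector absorbed as a bias; you have simply unpacked that cited matrix‑multiplication network into $(N+1)^2$ parallel $\ReLU$ multiplication gadgets with the standard $\delta=\epsilon(N+1)^{-3/2}$ accuracy budget, which yields the same depth and size bounds.
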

  \begin{proof}
    The statement follows from \cite[Proposition 3.7]{Kutyniok2022}
    since
   $\| \Id_N - \alpha^{-1}\hbB^{v}_N\|_2 \leq \beta/\alpha<1$ 
   and since $\alpha^{-1}\hbff_N$ 
   can be added as a bias in the output layer.
  \end{proof}
  \begin{lemma}
    \label{lemma:Phiit}
    Let $0<\beta<\alpha<\infty$ and $\rho=\ReLU$. 
Let $\{\hpsi_1, \dots, \hpsi_N\}\in Y^N$ and
let the matrix $\hbB^v_N$ and the vectors $\bck$ be defined as in
Section \ref{sec:RichMatFrm}.
    Then, 
    for all $\epsilon\in(0,1)$, all $v\in\cDab$,
    all $K\in \N$, and all $N\in \N$, 
    there exists a NN $\Phiit_{N, K, \alpha, \beta, \epsilon}$ 
    such that
    \begin{equation}
      \label{eq:it-error}
      \sup_{v\in \cDab}
      \| \bRealiz_{\rho}(\Phiit_{N, K,\alpha, \beta, \epsilon})\left(\vec(\Id_N - \alpha\hbB^{v}_N)\right) - \bcK \|_{\ell^2} \leq \epsilon,
    \end{equation}
    with $\rho = \ReLU$, 
    and, for some $C>0$ that depends on $\alpha$, $\beta$, and $f$,
    \begin{equation}
      \label{eq:it-size}
      \begin{aligned}
      \depth(\Phiit_{N, K,\alpha, \beta, \epsilon})& \leq C K \left( \left| \log\epsilon \right|+ \log N  +1\right),\\
      \size(\Phiit_{N, K,\alpha, \beta, \epsilon}) & \leq C K N^2\left( \left| \log\epsilon \right|+ \log N  +1\right).
      \end{aligned}
    \end{equation}
  \end{lemma}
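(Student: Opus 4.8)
The statement asserts the existence of a network $\Phiit_{N,K,\alpha,\beta,\epsilon}$ that emulates $K$ steps of the Richardson iteration \eqref{eq:iteration-alg} to accuracy $\epsilon$. The natural approach is to compose $K$ copies of the single-step iterator network $\Phistep$ from Lemma~\ref{lemma:Phistep}, track the propagation of the per-step emulation error through the (contractive) iteration, and pick the per-step tolerance to make the accumulated error at most $\epsilon$. The input to the network is the (fixed, once $v$ is given) flattened matrix $\vec(\Id_N - (\alpha\bB^{a_0}_N)^{-1}\bB^v_N)$; this must be carried forward unchanged through every layer via identity channels, so that each copy of $\Phistep$ receives both the matrix data and the current iterate. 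The initial iterate is $\bc^{(0)}_N = (1, 0, \dots, 0)^\top$, which can be hard-coded as a bias in the first layer.

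\emph{Step 1: error propagation.} Write $\bc^{v,(k)}_N$ for the exact Richardson iterate and $\widetilde{\bc}^{v,(k)}_N$ for the output of the $k$-th composed $\Phistep$ block. The exact iteration map $\bx \mapsto \bx - (\alpha\bB^{a_0}_N)^{-1}(\bB^v_N\bx - \bff_N)$ has Lipschitz constant $\| \Id_N - (\alpha\bB^{a_0}_N)^{-1}\bB^v_N\|_2 \leq \beta/\alpha < 1$ (as noted in the proof of Lemma~\ref{lemma:Phistep} and of Proposition~\ref{prop:iterative}). Denoting by $\epsilon'$ the per-step tolerance, the standard one-step-error-plus-contraction recursion gives $\|\widetilde{\bc}^{v,(k+1)}_N - \bc^{v,(k+1)}_N\|_{\ell^2} \leq \frac{\beta}{\alpha}\|\widetilde{\bc}^{v,(k)}_N - \bc^{v,(k)}_N\|_{\ell^2} + \epsilon'$, hence after $K$ steps the total error is at most $\epsilon' \sum_{j=0}^{K-1}(\beta/\alpha)^j \leq \epsilon'\,\frac{\alpha}{\alpha-\beta}$. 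Choosing $\epsilon' = \epsilon(\alpha-\beta)/\alpha$ yields \eqref{eq:it-error}. One subtlety: to invoke Lemma~\ref{lemma:Phistep} one must bound $\|\widetilde{\bc}^{v,(k)}_N\|_{\ell^2}$ uniformly, i.e.\ supply the parameter $Z$; but the exact iterates satisfy $\|\bc^{v,(k)}_N\|_{\ell^2} \leq 1 + \alpha/(\alpha-\beta)$ by \eqref{eq:iterative-bounds-b}, and the emulation error is at most $\epsilon \leq 1$, so one may take $Z = 2 + \alpha/(\alpha-\beta)$, an absolute constant depending only on $\alpha,\beta$. This makes $\log(\max(Z,1))$ a constant absorbed into $C$.

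\emph{Step 2: assembling the network and counting.} Concatenate $K$ copies of $\Phistep_{N, Z, \epsilon'}$ (using a standard concatenation lemma for ReLU networks, with identity-channel passthrough for the matrix data), prepending a layer that emits $(\vec(\cdot), \bc^{(0)}_N)$ from the input and discarding the matrix channel in the last layer. Since $|\log\epsilon'| = |\log\epsilon| + |\log((\alpha-\beta)/\alpha)| \leq |\log\epsilon| + C$, each copy of $\Phistep$ has depth $\leq C(|\log\epsilon| + \log N + 1)$ and size $\leq CN^2(|\log\epsilon| + \log N + 1)$ by Lemma~\ref{lemma:Phistep}. Concatenating $K$ of them, plus the $O(N^2)$-size input/output layers, multiplies depth and size by $K$ (up to additive lower-order terms and a constant from the passthrough channels), giving exactly the bounds \eqref{eq:it-size}.

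\emph{Main obstacle.} The only genuinely delicate point is bookkeeping: ensuring that the matrix data $\vec(\Id_N - (\alpha\bB^{a_0}_N)^{-1}\bB^v_N)$ is transported losslessly across all $K$ blocks without inflating the size estimate beyond $O(KN^2)$ — i.e.\ that the identity channels cost $O(N^2)$ per layer, not more — and that the contraction constant $\beta/\alpha$ of the \emph{emulated} (not just exact) step is still controlled, which is why uniform boundedness via the a priori bound \eqref{eq:iterative-bounds-b} is needed before one can even apply Lemma~\ref{lemma:Phistep}. Everything else is routine composition and geometric-series summation.
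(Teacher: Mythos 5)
Your proposal is correct and follows essentially the same route as the paper's proof: the network is built by prepending a depth-one layer emitting $(\vec(\bA^v),\bc^{(0)}_N)$ and then sparsely concatenating $K$ copies of $\Phistep$ from Lemma~\ref{lemma:Phistep}, the error is propagated via the contraction $\|\bA^v\|_2 \le \beta/\alpha$ and a geometric sum, and the per-step tolerance $\epsilon'=(1-\beta/\alpha)\epsilon$ together with the magnitude bound $Z = 2 + \alpha/(\alpha-\beta)$ (justified via \eqref{eq:iterative-bounds-b}) are exactly the choices $\epsit$ and $\tZ$ made in the paper. The bookkeeping of identity passthrough channels and the absorption of $\alpha,\beta$ into the constant $C$ also match the paper's size and depth estimates.
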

  \begin{proof}
    We start the proof with the observation 
   that a depth one NN exactly represents 
   \begin{equation*}
     \vec(\Id_N-\alpha^{-1}\hbB^{v}_N) \mapsto ((\vec(\Id_N-\alpha^{-1}\hbB^{v}_N), \bc^{(0)}_N),
   \end{equation*}
   where $\bc^{(0)}_N =  \analysis^{\hPsi_N} \left(\cS^{Y_N}(\alpha a_0)  \right)$.

   We apply a $K$-fold iteration of the network introduced in Lemma \ref{lemma:Phistep}.
   Therefore we consider, in the following, 
   the $K$-fold composition of $\Phistep_{N, \tZ, \epsit}$, 
   for the appropriate values of $\tZ$ and $\epsit$. 
   Specifically, we set
   \begin{equation}
     \label{eq:Zepsit}
     \tZ = 1 + \frac{1}{\alpha-\beta}\|f\|_{Y'}, \qquad \epsit = \left( 1- \frac{\beta}{\alpha} \right)\epsilon.
   \end{equation}
   Furthermore, we write 
   \begin{equation*}
   \bA_N^v = \Id_N-\alpha^{-1}\bB_N^{v}, \qquad
   \tbc_N^{v,(0)} = \bc^{(0)}_N
   \end{equation*}
   and, inductively,
   \begin{equation*}
     \tbckpone_N = \bRealiz_{\rho}(\Phistep_{N, \tZ, \epsit})(\vec(\bA_N^v), \tbck_N),\qquad \forall k\in \N_0.
   \end{equation*}
We recall that $\|\bA_N^v\|_2 \leq \beta/\alpha$. 
We estimate the error at the $(k+1)$th hidden layer:
   \begin{align*}
     \| \tbckpone_N - \bckpone_N \|_{\ell^2}
     &\leq
       \| \tbckpone_N - \bA_N^v \bck_N + \hbff_N\|_{\ell^2} + \| \bA^v(\tbck_N - \bck_N) \|_{\ell^2} \\
     &\leq
       \epsit + \frac{\beta}{\alpha} \|\tbck_N - \bck_N \|_{\ell^2}.
   \end{align*}
   Iterating the inequality above, we obtain
   \begin{equation*}
     \| \tbckpone_N - \bckpone_N \|_{\ell^2} \leq \sum_{j=0}^k\left( \frac{\beta}{\alpha} \right)^j \epsit \leq \frac{\alpha}{\alpha-\beta}\epsit= \epsilon,
   \end{equation*}
   which is \eqref{eq:it-error}.

   We still have to verify that $\| \tbck_N \|_{\ell^2}\leq \tZ$. 
   To do this, we use the inequality above and obtain, for all $k\in\N$,
   \begin{align*}
     \| \tbck_N \|_{\ell^2 }
     \leq \|\tbck_N - \bck_N\|_{\ell^2} + \| \bck_N \|_{\ell^2}
     \overset{\eqref{eq:iterative-bounds-b}}
     &{\leq} \epsilon
       + \frac{1}{\alpha-\beta}\|f\|_{Y'} \leq \tZ.
   \end{align*}
   To conclude the proof, we estimate the size of the $K$-fold concatenation of
   the networks $\Phistep_{N, \tZ,\epsit}$. 
   From Lemma \ref{lemma:Phistep}, we obtain
   \begin{align*}
     \depth(\Phistep_{N, \tZ,\epsit})& \leq C \left( \left| \log\epsilon \right|+ \log N  +1\right),\\
     \size(\Phistep_{N, \tZ,\epsit}) & \leq C N^2\left( \left| \log\epsilon \right|+ \log N +1 \right),
   \end{align*}
   where the constant $C$ depends additionally on $f$, $\alpha$, and $\beta$.
  \end{proof}
The following result is close to \cite[Theorem 4.3]{Kutyniok2022}, with two main
differences: we need to have an explicit dependence on the size of the parameter
space $M$, and we want to avoid the cubic dependence on $N$ that would result
from using the same proof strategy as in \cite[Theorem 4.3]{Kutyniok2022}.
  \begin{proposition}
    \label{prop:appx}
    Let $\rho=\ReLU$ and $0<\beta<\alpha<\infty$. For all $\epsilon\in(0,1)$,
    $N\in\N$ and $M\in \N$
    there exists a neural network
    $\Phiappx_{N, M, \alpha, \beta, \epsilon}$ 
    such that
    \begin{itemize}
    \item for all $\by\in \R^M$ and all $\Xi_M = \{\xi_1, \dots, \xi_M\}\in
     X^M$ satisfying $\by\cdot \Xi_M \in \cDab$,
    \item for all linearly independent functions
      $\Psi_N = \{ \psi_1, \dots, \psi_N\}\in Y^N$
  \end{itemize}
  the following bound holds
  \begin{equation}
      \label{eq:Phiapprox}
\| \bRealiz_{{\rho}}(\Phiappx_{N, M, \alpha, \beta, \epsilon})(\by)\cdot \Psi_N - \cS^{Y_N}(\by\cdot\Xi_M) \|_Y 
  \leq \epsilon, \qquad\text{with }Y_N = \spn(\Psi_N).
    \end{equation}
    Furthermore, there is a constant $C>0$ (depending on $\alpha,\beta$) 
    such that, for all $N,M,\epsilon$ as above
    \begin{equation}
      \label{eq:appx-size}
      \begin{aligned}
      \depth(\Phiappx_{N, M, \alpha, \beta, \epsilon}) &\leq C \left| \log\epsilon \right| \left( \left| \log\epsilon \right|+ \log N  +1\right),\\
      \size(\Phiappx_{N, M, \alpha, \beta, \epsilon}) &\leq C  N^2\left( \left| \log\epsilon \right|^2+ \log N \left| \log\epsilon \right| + M \right).
      \end{aligned}
    \end{equation}
  \end{proposition}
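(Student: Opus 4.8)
The plan is to realize $\Phiappx_{N,M,\alpha,\beta,\epsilon}$ as a concatenation $\Phiit\circ\Phiinput$ of two networks. Here $\Phiinput$ is the depth-one network of Lemma~\ref{lemma:Phiinput}, which sends the encoded datum $\by\in\R^M$ to $\vec(\Id_N-(\alpha\bB^{a_0}_N)^{-1}\bB^v_N)$ for $v=\by\cdot\Xi_M$; its weights depend on $\Xi_M$, but its size $\leq(N+1)^2M+(N+1)^2$ does not. The second factor, $\Phiit=\Phiit_{N,K,\alpha,\beta,\epsit}$, is the iterator network of Lemma~\ref{lemma:Phiit}, run for $K=K(\epsilon)$ Richardson steps at internal accuracy $\epsit=\epsit(\epsilon,N)$, both to be fixed below. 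Since the output of $\Phiinput$ is exactly the input expected by $\Phiit$ (and their interfacing affine layers merge), the concatenation is lossless, so Lemma~\ref{lemma:Phiit} delivers $\|\bRealiz_\rho(\Phiappx)(\by)-\bcK\|_{\ell^2}\leq\epsit$ with $\rho=\ReLU$, where $\bcK=\analysis^{\Psi_N}\big(u^{v,(K)}_N\big)$ is the coefficient vector of the \emph{exact} $K$-th Richardson iterate $u^{v,(K)}_N$.

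First I would bound the target error by the triangle inequality: $\|\bRealiz_\rho(\Phiappx)(\by)\cdot\Psi_N-\cS^{Y_N}(v)\|_Y$ is at most $\|(\bRealiz_\rho(\Phiappx)(\by)-\bcK)\cdot\Psi_N\|_Y+\|u^{v,(K)}_N-\cS^{Y_N}(v)\|_Y$, and control the two terms separately. By \eqref{eq:iterative-bounds-a} the second is $\leq\tfrac{1}{\alpha-\beta}(\beta/\alpha)^{K+1}\|f\|_{Y'}$, \emph{uniformly in $v\in\cDab$ and in $N$}; hence it falls below $\epsilon/2$ as soon as
\begin{equation}
\label{eq:Kchoice}
K=\lceil\log\!\big(2\|f\|_{Y'}/((\alpha-\beta)\epsilon)\big)/\log(\alpha/\beta)\rceil=\cO(1+|\log\epsilon|),
\end{equation}
a choice \emph{independent of $N$}. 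For the first term the key observation is that $\bB^{a_0}_N$ is precisely the Gram matrix of $\Psi_N$ in the energy inner product \eqref{eq:a0}, so $\|\bc\cdot\Psi_N\|_Y^2=\bc^\top\bB^{a_0}_N\bc\leq\lambda_{\max}(\bB^{a_0}_N)\|\bc\|_{\ell^2}^2$; moreover $\lambda_{\max}(\bB^{a_0}_N)\leq\operatorname{tr}(\bB^{a_0}_N)=\sum_{i=0}^N\|\psi_i\|_Y^2\leq(N+1)\|f\|_{Y'}^2/(\alpha-\beta)^2$, since $\psi_0=\cS(\alpha a_0)$ and $\psi_1,\dots,\psi_N\in\cU$ are all bounded in $Y$ by $\|f\|_{Y'}/(\alpha-\beta)$ via \eqref{eq:Snorm}. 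Thus the first term is $\leq\sqrt{N+1}\,\tfrac{\|f\|_{Y'}}{\alpha-\beta}\,\epsit$, and it suffices to take $\epsit:=\tfrac{(\alpha-\beta)\epsilon}{2\sqrt{N+1}\,\|f\|_{Y'}}$, for which $|\log\epsit|\leq|\log\epsilon|+\tfrac12\log(N+1)+\cO(1)$; combining the two bounds gives \eqref{eq:Phiapprox}.

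It then remains to tally depth and size. The factor $\Phiinput$ contributes depth $1$ and size $\lesssim N^2M$ (Lemma~\ref{lemma:Phiinput}), while by \eqref{eq:it-size} the factor $\Phiit_{N,K,\alpha,\beta,\epsit}$ contributes depth $\leq CK(|\log\epsit|+\log N+1)$ and size $\leq CKN^2(|\log\epsit|+\log N+1)$. Substituting \eqref{eq:Kchoice} and $|\log\epsit|=\cO(|\log\epsilon|+\log N)$, summing the two contributions, and absorbing lower-order terms into $C$, I obtain $\depth(\Phiappx)\leq C|\log\epsilon|(|\log\epsilon|+\log N+1)$ and $\size(\Phiappx)\leq CN^2(|\log\epsilon|^2+|\log\epsilon|\log N+M)$, which is \eqref{eq:appx-size}.

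The step I expect to require the most care is the passage from the $\ell^2$ control on network outputs supplied by Lemma~\ref{lemma:Phiit} to the $Y$-norm accuracy demanded by \eqref{eq:Phiapprox}: a priori this conversion costs a factor $\lambda_{\min}(\bB^{a_0}_N)^{-1/2}$, which is \emph{not} available, because the snapshot basis $\Psi_N$ of Section~\ref{sec:basis} (realizing the inner infimum in \eqref{eq:int-nwidth}) carries no stability bound. The construction avoids this because only the \emph{upper} Gram bound $\lambda_{\max}(\bB^{a_0}_N)\lesssim N$ enters the estimate --- the Richardson truncation error in \eqref{eq:iterative-bounds-a} is already measured in $Y$ by Proposition~\ref{prop:iterative}, so no inverse of the (possibly ill-conditioned) matrix $\bB^{a_0}_N$ ever appears --- and the resulting $\tfrac12\log N$ added to $|\log\epsit|$ is absorbed harmlessly into the $\log N$ already present in the bounds of Lemma~\ref{lemma:Phiit}.
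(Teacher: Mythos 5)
Your proof is correct and follows essentially the same route as the paper: the same $\Phiappx = \Phiit \sconc \Phiinput$ construction, the same choice $K = \cO(|\log\epsilon|)$, the same internal tolerance $\epsit \sim (\alpha-\beta)\epsilon / ((N+1)^{1/2}\|f\|_{Y'})$, and the same two-term splitting of the error (Richardson truncation error controlled by \eqref{eq:iterative-bounds-a}, network approximation error converted from $\ell^2$ to $Y$). The one stylistic difference is in the $\ell^2$-to-$Y$ conversion: the paper uses the triangle inequality plus Cauchy--Schwarz on the vector $(\|\psi_i\|_Y)_i$, while you use the Gram-matrix identity $\|\bc\cdot\Psi_N\|_Y^2 = \bc^\top\bB^{a_0}_N\bc$ and bound $\lambda_{\max}(\bB^{a_0}_N)$ by the trace; both yield the identical factor $(N+1)^{1/2}\|f\|_{Y'}/(\alpha-\beta)$, so the arguments are interchangeable. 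Your closing remark that only an \emph{upper} Gram bound is needed — so no $\lambda_{\min}(\bB^{a_0}_N)^{-1/2}$ ever appears, despite the lack of any stability guarantee for the greedy snapshot basis — is a correct and worthwhile observation about why the argument does not degrade with ill-conditioned $\Psi_N$.
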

  \begin{proof}
    Denote the parametric input as $v = \by\cdot \Xi_M$.
\\
\noindent\textbf{Construction of the network.}
    We choose in the Richardson Iteration in Prop.~\ref{prop:iterative} the number $K$ of steps as

    \begin{equation}
      \label{eq:Kchoice}
      K = \left\lceil \frac{\left| \log(\epsilon/2) \right|+ \left| \log\|f\|_{Y'} - \log(\alpha-\beta)\right| } {\left| \log(\beta/\alpha) \right|}\right\rceil .
    \end{equation}
    We will use the operation of \emph{sparse concatenation $\sconc$ of NNs} 
    as introduced in \cite{Petersen2018}: 
    given two NNs $\Phi_1$ and $\Phi_2$, 
    there exists a third NN, denoted $\Phi_1 \sconc \Phi_2$, 
    such that
    $\Realiz(\Phi_1\sconc\Phi_2) = \Realiz(\Phi_1)\circ \Realiz(\Phi_2)$ 
    with
    $\depth(\Phi_1 \sconc \Phi_2)\leq \depth(\Phi_1) + \depth(\Phi_2)$ 
    and 
    $\size(\Phi_1 \sconc \Phi_2)\leq 2\size(\Phi_1) + 2\size(\Phi_2)$. 

    Let us now introduce an orthonormalized basis $\hPsi_N$, and denote by $\bH_N$
    the (invertible) matrix such that
    \begin{equation}
      \label{eq:Hdef}
      \Psi_N = \bH_N^\top \hPsi_N.
    \end{equation}

    Consider next
    the encoding network $\Phi^{\hbB}$ defined in Lemma~\ref{lemma:Phiinput}
    associated with the basis $\hPsi_N$ and
    the network $\Phiit$ in Lemma~\ref{lemma:Phiit}. 
    We introduce the network
    \begin{equation}\label{eq:ApprNet}
      \Phiappx_{N, M, \alpha, \beta, \epsilon} \coloneqq (\bH_N^{-1}, \bzero)\sconc\Phiit_{N, K,\alpha, \beta,  \epsilon/2} \sconc \Phiinput.
    \end{equation}

\noindent\textbf{Bounding the expression error of the network.}
    Let $u^{v, (K)}_N$ be the $K$th iterate obtained through the iterative
    scheme of Section \ref{sec:it} with datum $v$ and in the space $Y_N$.
    We have that
    \begin{equation}
      \label{eq:approx-bound-1}
      \| u^{v, (K)}_N   - \cS^{Y_N}( v )\|_Y \leq \frac{1}{\alpha-\beta}\left( \frac{\beta}{\alpha} \right)^{K+1} 
      \| f \|_{Y'}\overset{\eqref{eq:Kchoice}}{\leq} \frac{\epsilon}{2}
    \end{equation}
    From
    Lemma  \ref{lemma:Phiit}, writing $\bcK = \analysis^{\hPsi_N}u_N^{v, (K)}$,
    we have
    \begin{align}
      \label{eq:approx-bound-2}
      \begin{aligned}
      \| \bRealiz(\Phiappx_{N, M, \alpha, \beta, \epsilon})(\by)\cdot\Psi_N -  u_N^{v, (K)}\|_{Y}
        &= \| \left( \bH_N\bRealiz(\Phiappx_{N, M, \alpha, \beta, \epsilon})(\by)  - \bcK_N\right)\cdot \hPsi_N\|_{Y}
        \\ &= \| \bH_N\bRealiz(\Phiappx_{N, M, \alpha, \beta, \epsilon})(\by) - \bcK_N\|_{\ell^2}
        \\ &
        = \| \bRealiz(\Phiit_{N, K,\alpha, \beta,  \epsilon/2} \sconc \Phiinput)(\by) - \bcK\|_{\ell^2}
             \\ &
             \leq \frac{\epsilon}{2}.
      \end{aligned}
    \end{align}
    Combining \eqref{eq:approx-bound-1} and \eqref{eq:approx-bound-2} we obtain
    \eqref{eq:Phiapprox}.

    \noindent\textbf{Depth and size estimate.} The bounds \eqref{eq:appx-size} 
    on the depth and size of the approximator network $\Phiappx_{N, M, \alpha, \beta, \epsit}$ 
    follow directly from Lemmas \ref{lemma:Phiinput} and \ref{lemma:Phiit}.
  \end{proof}
  \begin{remark}
    \label{remark:appx-RNN}
    A part of
    the approximator network constructed in Proposition \ref{prop:appx} 
    has a recurrent structure, 
    being $\mathcal{O}(\left|\log \epsilon\right|))$ many 
    sparse concatenations of the network built in Lemma~\ref{lemma:Phiit}. 
    If this subnetwork is viewed as a RNN, 
    the bound on the depth of $\Phiappx_{N, M, \alpha, \beta, \epsilon}$
    can be divided by a factor $\left| \log\epsilon \right|$.
  \end{remark}
\subsection{Bounds based on $N$-widths of the input and holomorphy} 
\label{sec:OpExpRt}
Kolmogorov $N$-widths of solutions sets of elliptic PDEs in polytopes, 
given a certain regularity of the input data $\cK\subset X$,
can be bounded in essentially two ways:

(a) by estimating, via Lemma~\ref{lemma:Nwidth-holomorphic-general}, 
    the $N$-widths of the solution set $\cU = \cS(\cK)$ under holomorphic map $\cS$,
or  
(b) from the solution regularity in function spaces 
    and piecewise polynomial approximation rates.

We now obtain our first set of expression rate bounds, within the framework (a).
Expression rate bounds obtained within framework (b) will be developed 
in Section~\ref{sec:polytopes} ahead.

The next result (Theorem \ref{th:ON-input-Nwidth}) 
uses the $N$-width bound on solution sets 
under holomorphic maps, 
see Lemma~\ref{lemma:Nwidth-holomorphic-general}.
To this end, we require that 
    $\cS$ can be extended to a holomorphic operator. 
    In particular, 
    we suppose
    that $\alpha$ and $\beta$ are such that the following
    assumption is satisfied.
\begin{assumption} (Holomorphy of the data-to-solution map)
 \label{assumption:holomorphy}
 The data-to-solution map $\cS$ is real analytic from $\cDab$ to $Y$ and it
 admits a holomorphic and uniformly bounded extension from an open set 
 $O\subset X^{\mathbb{C}} = X\otimes \{1, i\}$ 
 into $Y^{\mathbb{C}}     = Y\otimes \{1, i\}$, 
 with $\cDab \subset O$.
\end{assumption}
Assumption \ref{assumption:holomorphy}
holds for linear, second order divergence form PDEs (see, e.g. \cite{Bacuta2017}).
  We next prove an existence result 
  with quantitative size bounds for the approximation network 
  of a neural operator emulating the coefficient-to-solution map of \eqref{eq:problem}.
  We do this under Assumption \ref{assumption:holomorphy} and 
  requiring 
  only an algebraic decay rate of the Kolmogorov $N$-widths of the input set $\cK$.
  \begin{theorem}
    \label{th:ON-input-Nwidth}
   Let $X$ be a uniformly convex Banach space.
   Let $0<\beta<\alpha<\infty$ and $\cS$ be such that 
   Assumption \ref{assumption:holomorphy} holds.
   Let $\cKab\subset\cDab$ be a subset that is compact
   in the $X$-topology, with algebraic $N$-width decay
   $d_M(\cKab, X) \leq C_\cK M^{-s}$ for all $M\in\N$,
   with suitable constants $C_\cK, s>1$.  
   Let $\rho=\ReLU$.
   \\
   Then, for all $\epsilon\in(0,1)$, 
   there exist $M_\epsilon, N_\epsilon \in \N$
   and a $\ReLU$ neural operator 
   \begin{equation*}
     \cG_\epsilon = \cR_\epsilon \circ \bRealiz_{\rho}(\Phi_\epsilon)\circ \cE_\epsilon
   \end{equation*}
   with continuous encoder
   $\cE_\epsilon : \cKab \to \R^{M_\epsilon}$, 
   linear decoder $\cR_\epsilon: \R^{N_\epsilon}\to Y$, 
   and approximator NN
   $\bRealiz_{\rho}(\Phi_\epsilon): \R^{M_\epsilon}\to\R^{N_\epsilon}$,
   satisfying
   \begin{equation}
     \label{eq:bound-generic}
     \| \cS - \cG_\epsilon \|_{L^\infty(\cK;Y)} \leq \epsilon.
   \end{equation}
   In addition, for all $0 < t< s-1$, there exists $C>0$ such that for all
   $\epsilon\in(0,1)$,  $M_\epsilon \leq
   C\epsilon^{-1/s}$,  $N_\epsilon \leq C\epsilon^{-1/t}$, and
   \begin{equation}
     \label{eq:appx-size-corollary}
       \depth(\Phi_{\epsilon}) \leq C  \left( \left| \log\epsilon \right|^2   +1\right),\qquad
       \size(\Phi_{\epsilon}) \leq C  \epsilon^{-2/t - 1/s}.
   \end{equation}
  \end{theorem}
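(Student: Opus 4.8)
The plan is to assemble $\cG_\epsilon$ from the three building blocks already prepared: a linear encoder given by an $N$-width-rate-optimal analysis operator on $\cD$, the approximator network $\Phiappx$ of Proposition~\ref{prop:appx}, and a linear decoder built from the snapshot basis $\Psi_N$ of Section~\ref{sec:basis}. First I would fix the discretization parameters. Under Assumption~\ref{assumption:holomorphy} and the algebraic data $N$-width hypothesis, Lemma~\ref{lemma:Nwidth-holomorphic} gives, for any $t<s-1$, a constant $\tC$ with $d_N(\cS(\cD),Y)\le \tC N^{-t}$; combined with Lemma~\ref{lemma:greedy}(2) (see also Section~\ref{sec:basis}), the snapshot basis $\Psi_N$ satisfies $\sup_{u\in\cU}\inf_{v\in\spn(\Psi_N)}\|u-v\|_Y\le \tC N^{-t}$. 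So I would choose $N_\epsilon$ to be the least integer with $\tC N_\epsilon^{-t}\le \epsilon/3$, giving $N_\epsilon\le C\epsilon^{-1/t}$, and likewise $M_\epsilon$ the least integer with $C_L\, C_\cD M_\epsilon^{-s}\le \epsilon/3$ (using the Lipschitz bound \eqref{eq:S-Lip} to transport the data-approximation error through $\cS$), giving $M_\epsilon\le C\epsilon^{-1/s}$.

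**Assembling the operator.** The encoder $\cE_\epsilon$ is the analysis operator $\analysis^{\Xi_{M_\epsilon}}$ associated with an $M_\epsilon$-dimensional subspace $X_{M_\epsilon}=\spn(\Xi_{M_\epsilon})\subset X$ realizing (up to a factor) the $N$-width $d_{M_\epsilon}(\cD,X)$; one must note that $\cE_\epsilon$ is only applied to $a\in\cD$, and that one is free to replace $a$ by its best approximation $P_{M_\epsilon}a\in X_{M_\epsilon}$ — the point where a small technical remark is needed is that $P_{M_\epsilon}a$ need not lie in $\cDab$, but this is handled exactly as in Proposition~\ref{prop:appx}, whose hypothesis only requires $\by\cdot\Xi_M\in\cDab$; alternatively one slightly enlarges $\alpha,\beta$ so that a neighborhood of $\cD$ stays admissible. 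The approximator is $\Phi_\epsilon\coloneqq \Phiappx_{N_\epsilon,M_\epsilon,\alpha,\beta,\epsilon/3}$ from Proposition~\ref{prop:appx}. The decoder $\cR_\epsilon:\R^{N_\epsilon+1}\to Y$ is the linear synthesis map $\bc\mapsto \bc\cdot\Psi_{N_\epsilon}$ (this is the map \eqref{eq:Fdec} in the degenerate case where $\Phi_{\mathrm{dec}}$ just stores the fixed vectors $\psi_i$). Then $\cG_\epsilon=\cR_\epsilon\circ\bRealiz_\rho(\Phi_\epsilon)\circ\cE_\epsilon$ has the required structure.

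**Error bound.** For $a\in\cD$ write $v=\cE_\epsilon(a)\cdot\Xi_{M_\epsilon}\in X_{M_\epsilon}$ (the best $X$-approximation of $a$). Split
\begin{equation*}
\|\cS(a)-\cG_\epsilon(a)\|_Y
\le \|\cS(a)-\cS(v)\|_Y
+ \|\cS(v)-\cS^{Y_{N_\epsilon}}(v)\|_Y
+ \|\cS^{Y_{N_\epsilon}}(v)-\cG_\epsilon(a)\|_Y.
\end{equation*}
The first term is $\le C_L\|a-v\|_X\le C_L C_\cD M_\epsilon^{-s}\le \epsilon/3$ by \eqref{eq:S-Lip} and the choice of $M_\epsilon$. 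The second term is the Galerkin best-approximation error: by Céa's lemma (uniform continuity/coercivity from \eqref{eq:cDab}) it is bounded by a constant times $\inf_{w\in Y_{N_\epsilon}}\|\cS(v)-w\|_Y\le \tC N_\epsilon^{-t}\le \epsilon/3$, since $\cS(v)\in\cU$ and $Y_{N_\epsilon}=\spn(\Psi_{N_\epsilon})$. The third term is exactly $\le \epsilon/3$ by \eqref{eq:Phiapprox} of Proposition~\ref{prop:appx}. Summing gives \eqref{eq:bound-generic}. The depth and size bounds \eqref{eq:appx-size-corollary} follow by substituting $N=N_\epsilon\le C\epsilon^{-1/t}$, $M=M_\epsilon\le C\epsilon^{-1/s}$, and accuracy $\epsilon/3$ into \eqref{eq:appx-size}: the depth becomes $C|\log\epsilon|(|\log\epsilon|+\log N_\epsilon+1)\le C(|\log\epsilon|^2+1)$, and the size becomes $C N_\epsilon^2(|\log\epsilon|^2+\log N_\epsilon|\log\epsilon|+M_\epsilon)\le C\epsilon^{-2/t}(|\log\epsilon|^2+\epsilon^{-1/s})\le C\epsilon^{-2/t-1/s}$, absorbing the logarithmic factor.

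**Main obstacle.** The one delicate point is the second (Galerkin) error term: the bound \eqref{eq:greedy-alg}/Section~\ref{sec:basis} controls the \emph{best} approximation of elements of $\cU$ by $\spn(\Psi_N)$, whereas $\cS^{Y_N}(v)$ is the Galerkin projection, not the best approximation. Céa's lemma closes this gap with a constant $C_{\mathrm{coer}}/C_{\mathrm{cont}}$ depending only on $\alpha,\beta$ (via \eqref{eq:cDab}), and this constant must be tracked through the choice of $N_\epsilon$ so that the final constant in $N_\epsilon\le C\epsilon^{-1/t}$ depends only on $\alpha,\beta,C_\cD,f,t$. A second, minor subtlety is ensuring $v=P_{M_\epsilon}a$ lies in $\cDab$ (or in a slightly enlarged admissible set) so that $\cS(v)$, $\cS^{Y_N}(v)$ and Proposition~\ref{prop:appx} all apply; one handles this by shrinking $\beta$ or, equivalently, by invoking that $\cD$ is compactly contained in $\cDab$ so that $P_{M_\epsilon}a\in\cDab$ for all $M_\epsilon$ large enough. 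Everything else is bookkeeping of constants.
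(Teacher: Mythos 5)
Your overall strategy — encoder from a rate-optimal $M$-dimensional subspace, approximator from Proposition~\ref{prop:appx}, decoder $\bc \mapsto \bc\cdot\Psi_N$, error split into three pieces — matches the paper's. However, your choice of error decomposition is different from the paper's, and it introduces a gap that you do not close.

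The paper decomposes
\begin{equation*}
  \|\cS(a)-\cG_\epsilon(a)\|_Y \le
  \|\cS(a)-\cS^{Y_N}(a)\|_Y
  + \|\cS^{Y_N}(a)-\cS^{Y_N}(v)\|_Y
  + \|\cS^{Y_N}(v)-\cG_\epsilon(a)\|_Y,
\end{equation*}
with $v=\cE_{M}(a)\cdot\Xi_{M}$. The first (Galerkin) term then involves $\cS(a)$ with $a\in\cD$, so $\cS(a)\in\cU$ and $\delta_N(\cU,Y)$ applies directly, with Céa constant $\tfrac{\alpha+\beta}{\alpha-\beta}$; the middle term is the Lipschitz continuity of $\cS^{Y_N}$ (uniform in $N$). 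You instead split
\begin{equation*}
  \|\cS(a)-\cS(v)\|_Y + \|\cS(v)-\cS^{Y_N}(v)\|_Y + \|\cS^{Y_N}(v)-\cG_\epsilon(a)\|_Y,
\end{equation*}
and for the middle term you appeal to $\inf_{w\in Y_N}\|\cS(v)-w\|_Y\le \tC N^{-t}$ on the grounds that \emph{``$\cS(v)\in\cU$.''} That claim is false in general: $v$ is the best $X$-approximation of $a$ in an $M$-dimensional subspace, so $v\notin\cD$ except by accident, and hence $\cS(v)\notin\cU=\cS(\cD)$. The $\delta_N(\cU,Y)$ bound controls only best approximation of elements \emph{of} $\cU$ by $\spn(\Psi_N)$. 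Your ``Main obstacle'' paragraph addresses a different issue — whether $v$ lies in $\cDab$ so that $\cS(v)$ is defined — but does not address why its best approximation from $\spn(\Psi_N)$ should still decay like $N^{-t}$.

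The gap is fixable without changing your decomposition: by the triangle inequality,
\begin{equation*}
  \inf_{w\in Y_N}\|\cS(v)-w\|_Y
  \le
  \inf_{w\in Y_N}\|\cS(a)-w\|_Y + \|\cS(v)-\cS(a)\|_Y
  \le \tC N^{-t} + C_L\|a-v\|_X,
\end{equation*}
and the second summand is already $\le \epsilon/3$ by your choice of $M_\epsilon$; then re-tune the thirds (say using $\epsilon/6$ for the individual pieces) and track the Céa constant $\tfrac{\alpha+\beta}{\alpha-\beta}$ through the choice of $N_\epsilon$. Alternatively, adopt the paper's decomposition, which places the Galerkin error on $\cS(a)$ where the $N$-width bound applies verbatim. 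The remainder of your argument — the parameter choices, the use of Lemmas~\ref{lemma:Nwidth-holomorphic} and~\ref{lemma:greedy}, the substitution into \eqref{eq:appx-size} to obtain \eqref{eq:appx-size-corollary}, and the handling of $v\in\cD_{\alpha,\tbeta}$ via enlarging $\beta$ — is correct and agrees with the paper.
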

  \begin{proof}
    We denote, in this proof, by 
$$\Xi_N =\{\xi_1, \dots, \xi_N\} \subset X \;\mbox{and}\; 
  \Psi_N = \{\psi_1, \dots, \psi_N\} \subset Y,
$$
      quasi-optimal sequences in the sense of $N$-widths, i.e., 
      such that for $C_1\geq 1$
      \begin{equation*}
        \dist_X(\cKab, \spn\Xi_M) \leq C_1 d_N(\cKab, X),
      \end{equation*}
      and
      \begin{equation*}
        \dist_Y(\cS(\cKab), \spn\Psi_N) \leq C_1 d_N(\cS(\cKab), Y)
      \end{equation*}
 for all $N\in\N$.

The assumed algebraic decay of the $N$-width $d_N(\cKab, X)$ and compactness of
$\cKab$ ensure that
there exists $M_0\in \N$ and
$\tbeta\in (0,\alpha)$ such that, 
for all $M\geq M_0$,
    \begin{equation*}
      \cE_M (a)\cdot \Xi_M\in \cD_{\alpha, \tbeta}, \qquad \forall a\in \cKab,
    \end{equation*}
    where we have introduced $\cE_M$ such that for all $a\in\cKab$
    \begin{equation*}
      \| \cE_M(a) \cdot \Xi_M -a\|_X = \dist(a, \spn\Xi_M).
    \end{equation*}
Continuity of $\cE_M$ is ensured by the 
uniform convexity of $X$ \cite[Proposition 3.2]{Goebel1984}.

    Next, fix $t\in (0, s-1)$.
    As $\cS^{Y_N}$ is Lipschitz continuous uniformly with respect to $N$ \cite[Lemma B.1]{MS2023}, 
    there exists $\tClip>0$
    that depends on $\alpha, \tbeta$ (hence $\beta$ and $M_0$) and $f$, such that
    \begin{equation}
      \label{eq:LipSXN}
      \sup_{a,b \in \cD_{\alpha, \tbeta}}\| S^{Y_N}(a) - S^{Y_N}(b) \|_{Y}\leq \tClip\|a -b \|_{L^\infty(\Omega)}.
    \end{equation}
    Fix a value of $M_0$ and $\tbeta$ and let
    \begin{equation}
      \label{eq:Meps}
      M_\epsilon = \max\left( M_0, \left\lceil   \left(\frac{\epsilon}{3C_1C_{\cK} \tClip}   \right)^{-1/s}\right\rceil\right),
    \end{equation}
    so that $C_1\tClip d_{M_\epsilon}(\cKab, L^\infty(\Omega))\leq \epsilon/3$.

    By Lemma \ref{lemma:Nwidth-holomorphic-general} and Assumption \ref{assumption:holomorphy},
    furthermore, there exists $\tC_\cU>0$ such that
    \begin{equation}
      \label{eq:Nwidth-U-t}
      \dist_Y(\cS(\cKab), \spn\Psi_N) \leq   C_1\tC_\cU N^{-t}, \qquad \forall N\in\N. 
    \end{equation}
    Let 
    \begin{equation*}
       N_\epsilon = \left\lceil \left( \frac{\alpha-\beta}{\alpha+\beta}\frac{\epsilon}{3C_1\tC_\cU} \right)^{-1/t} \right\rceil,
    \end{equation*}
    and $\cR_\epsilon(\by) = \by\cdot\Psi_{N_\epsilon}$ for all $\by\in\R^{N_\epsilon+1}$.
    Choose, with the notation of Proposition \ref{prop:appx}, 
    $$\Phi_{\epsilon} = \Phiappx_{N_\epsilon, M_\epsilon, \alpha, \tbeta, \epsilon/3}.$$ 
    We write 
    \begin{align*}
      \| \cS(a) - \cG_{\epsilon}(a) \|_Y
      &
        \leq
        \begin{aligned}[t]
          &\| \cS(a) - \cS^{Y_{N_\epsilon}}(a) \|_Y + \| \cS^{Y_{N_\epsilon}}(a) - \cS^{Y_{N_\epsilon}}(\cE_{M_\epsilon}(a)\cdot\Xi_{M_\epsilon}) \|_Y
          \\   &\qquad  + \|\cS^{Y_{N_\epsilon}}(\cE_{M_\epsilon}(a)\cdot\Xi_{M_\epsilon})  - \cG_\epsilon(a)\|_Y
        \end{aligned}
      \\
      &\eqqcolon (I)+  (II) + (III).
    \end{align*}
    We bound the three terms. For term $(I)$, using \eqref{eq:Nwidth-U-t},
    \begin{equation*}
      \sup_{a\in \cD} (I)    \leq \frac{\alpha+\beta}{\alpha-\beta}\dist(\cS(\cKab), Y_{N_\epsilon})\leq \frac{\alpha+\beta}{\alpha-\beta}C_1\tC_{\cU}N_\epsilon^{-t}\leq \frac{\epsilon}{3}.
    \end{equation*}
    Term $(II)$ is estimated as
    \begin{equation*}
      (II) \overset{\eqref{eq:LipSXN}}{\leq} \tClip \| a  - \cE_{M_\epsilon}(a) \cdot \Xi_{M_\epsilon} \|_{L^\infty(\Omega)}  \leq C_1\tClip C_\cK M_\epsilon^{-s} \overset{\eqref{eq:Meps}}{\leq} \frac{\epsilon}{3}.
    \end{equation*}
    Finally, for term $(III)$ we use Proposition \ref{prop:appx} to obtain
    \begin{equation*}
      (III) \leq \frac{\epsilon}{3}.
    \end{equation*}
    This concludes the proof of \eqref{eq:bound-generic}. The bounds on the
    depth and size of $\Phi_\epsilon$ also follow from Proposition \ref{prop:appx}.
  \end{proof}
In the absence of uniform convexity of $X$, 
reflexive Banach spaces exist with discontinuous nearest point projection in $X$
onto $\spn\Xi_M$ \cite{ALBrown74}.

  \subsection{Explicit bounds with bases on the input and output spaces}
  \label{sec:full-op}
  We prove bounds on the size of the approximator network, 
  in the context of the encoder-approximator-decoder architecture \eqref{eq:G=RAE}
  in an abstract setting. 
The goal of this is mostly technical: Proposition \ref{prop:general-ON} 
will be instrumental for 
proving the expression rate bounds in Section \ref{sec:polytopes}.
  \begin{setting}
    \label{set:base}
    Let $0<\beta<\alpha<\infty$ and let $\cKab\subset \cDab$ be a compact subset
    with respect to the $X$-induced topology. 
    Denote $\cU = \cS(\cKab)$. 
    \begin{enumerate}
    \item\label{item:base-conv} 
      For all $M\in\N$,
      encoder maps $\cE_M : \cKab \to \R^M$ and functions $\Xi_M = \{\xi_1, \dots, \xi_M\}\in X^M$ 
      are given such that
      \begin{equation}
        \label{eq:encoder-conv}
        \sup_{a\in \cKab}\| \cE_M (a)\cdot\Xi_M - a \|_{X} \to 0  \qquad \text{as }M\to\infty.
      \end{equation}
    \item $\Psi_N = \{\psi_1, \dots, \psi_N\}\in Y^{N}$ are linearly independent
      and $Y_N =\spn(\Psi_N)$.
    \end{enumerate}
\end{setting}
The following proposition is a technical result to prove
expression rate bounds for coefficient-to-solution maps for
linear, second order, divergence-form PDEs, as specified 
in Setting~\ref{setting:base-polytope} ahead.
Its proof is similar to that of Theorem \ref{th:ON-input-Nwidth}, 
but argues with Proposition \ref{prop:appx}.
It \emph{does not invoke Lemma~\ref{lemma:Nwidth-holomorphic-general}},
i.e., it
does not require the holomorphic extension of $\cG$.

  \begin{proposition}
    \label{prop:general-ON}
    Assume Setting \ref{set:base}. 
    Then, 
    there exists $M_0 \in \N$ such that 
    for all $M\geq M_0$, all $N\in\N$, and all $\epsapp\in (0,1)$,
    there exist an approximator NN $\Phi_{N, M,\epsapp}$ 
    and a neural operator
    \begin{equation*}
      \cG_{N, M, \epsapp}:
        \begin{cases}
        \cDab \to Y \\ 
      a\mapsto \Psi_{N}\cdot \left(\bRealiz_{{\rho}}(\Phi_{N, M, \epsapp}) \circ \cE_M(a)  \right)
        \end{cases}
    \end{equation*}
    with 
    $\rho=\ReLU$ activation, 
    such that
    \begin{equation*}
      \| \cS - \cG_{N, M, \epsapp} \|_{L^\infty(\cKab; Y)} 
      \lesssim 
      \sup_{a\in \cKab}\| \cE_M(a)\cdot \Xi_M - a \|_{L^\infty(\Omega)} + \dist_Y(\cU, Y_N) + \epsapp 
    \end{equation*}
    and, as $\epsapp \to 0$ and $M, N \to \infty$,
    \begin{align*}
      \depth(\Phi_{N, M, \epsapp})  &= \cO\left( \left| \log\epsapp \right| \left( \left| \log\epsapp \right|+ \log N  \right) \right),
      \\
      \size(\Phi_{N, M, \epsapp}) &= \cO\left(   N^2\left( \left| \log\epsapp \right|^2+ \log N \left| \log\epsapp \right| + M \right) \right).
    \end{align*}
  \end{proposition}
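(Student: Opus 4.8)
The plan is to combine the approximator network of Proposition~\ref{prop:appx} with a standard C\'ea estimate and with the $N$-uniform Lipschitz continuity of the Galerkin solution map, and then to trade the exact decoder basis $\Psi_N$ for the given perturbed family $\Theta_{\epsrec,N}$. First I would fix $\tbeta$ with $\beta<\tbeta<\alpha$. By Setting~\ref{set:base} and the convergence \eqref{eq:encoder-conv}, there is $M_0\in\N$ such that $\sup_{a\in\cD}\|\cE_M(a)\cdot\Xi_M-a\|_{X}\le\tbeta-\beta$ for all $M\ge M_0$; together with the continuity bound \eqref{eq:bilinear-cont} on $\frab$ this yields $\cE_M(a)\cdot\Xi_M\in\cD_{\alpha,\tbeta}$ for every $a\in\cD$ and $M\ge M_0$. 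I would then set $\Phi_{N,M,\epsapp}:=\Phiappx_{N,M,\alpha,\tbeta,\epsapp}$ from Proposition~\ref{prop:appx} (applicable since $0<\tbeta<\alpha$), which already has the asserted depth and size, and define the ONet by $\cG_{N,M,\epsapp,\epsrec}(a)=\Theta_{\epsrec,N}\cdot\bigl(\bRealiz_\rho(\Phi_{N,M,\epsapp})\circ\cE_M(a)\bigr)$, i.e.\ with the linear decoder $\by\mapsto\by\cdot\Theta_{\epsrec,N}$.

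For the error estimate, for $a\in\cD$ I would write $\ta:=\cE_M(a)\cdot\Xi_M\in\cD_{\alpha,\tbeta}$ and $\bc:=\bRealiz_\rho(\Phi_{N,M,\epsapp})(\cE_M(a))\in\R^{N+1}$ and split
\begin{align*}
\|\cS(a)-\cG_{N,M,\epsapp,\epsrec}(a)\|_Y
&\le\|\cS(a)-\cS^{Y_N}(a)\|_Y+\|\cS^{Y_N}(a)-\cS^{Y_N}(\ta)\|_Y\\
&\quad+\|\cS^{Y_N}(\ta)-\bc\cdot\Psi_N\|_Y+\|\bc\cdot(\Psi_N-\Theta_{\epsrec,N})\|_Y\\
&\eqqcolon(I)+(II)+(III)+(IV).
\end{align*}
Since $\cS^{Y_N}(a)$ is the $\frab(a;\cdot,\cdot)$-Galerkin projection of $\cS(a)$ onto $Y_N=\spn\Psi_N$, C\'ea's lemma with the coercivity and continuity constants of \eqref{eq:cDab}, followed by the defining property of $\Psi_N$ from Section~\ref{sec:basis}, gives $(I)\le\tfrac{\alpha+\beta}{\alpha-\beta}\inf_{v\in Y_N}\|\cS(a)-v\|_Y\le\tfrac{\alpha+\beta}{\alpha-\beta}\,\delta_N(\cU,Y)$. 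The $N$-uniform Lipschitz continuity of $\cS^{Y_N}$ on $\cD_{\alpha,\tbeta}$ (\cite[Lemma~B.1]{MS2023}, cf.\ \eqref{eq:LipSXN}) gives $(II)\le C_L\|a-\ta\|_{X}$. Term $(III)\le\epsapp$ is precisely \eqref{eq:Phiapprox} applied with input $\by=\cE_M(a)$. Finally, $\bc$ is the output of the unrolled Richardson iteration, so the bound $\|\bc\|_{\ell^2}\le 2+\alpha/(\alpha-\tbeta)$ established in the proofs of Lemma~\ref{lemma:Phiit} and Proposition~\ref{prop:appx} applies; a Cauchy--Schwarz step and Setting~\ref{set:base} then give $(IV)\le\|\bc\|_{\ell^2}\,\bigl\|\,\|\Psi_N-\Theta_{\epsrec,N}\|_{\ell^2}\,\bigr\|_Y\le\bigl(2+\alpha/(\alpha-\tbeta)\bigr)\epsrec$. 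Summing the four contributions and absorbing $\tfrac{\alpha+\beta}{\alpha-\beta}$, $C_L$ and $2+\alpha/(\alpha-\tbeta)$ into the implied constant yields the claimed $L^\infty(\cD;Y)$ bound, and the depth and size estimates are inherited verbatim from Proposition~\ref{prop:appx}.

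The only step that needs genuine attention is the first one: exhibiting a single $M_0$ for which all encoded inputs fall into a strictly interior admissible set $\cD_{\alpha,\tbeta}$ with $\tbeta<\alpha$. This is what makes Proposition~\ref{prop:appx} --- and with it the geometrically convergent Richardson iteration --- applicable, what provides an $N$-independent Lipschitz constant $C_L$ for $\cS^{Y_N}$, and what keeps $\|\bc\|_{\ell^2}$ bounded so that the decoder-perturbation term $(IV)$ scales linearly in $\epsrec$. The rest of the argument is bookkeeping: it merely combines already established results and contains no delicate estimate.
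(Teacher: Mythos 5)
Your proposal follows essentially the same route as the paper's own proof: the same choice of $\Phi_{N,M,\epsapp}=\Phiappx_{N,M,\alpha,\tbeta,\epsapp}$ after shrinking the admissible set to $\cD_{\alpha,\tbeta}$, the same four-term triangle-inequality splitting into Galerkin error, encoder-perturbation error, approximator error, and decoder-perturbation error, and the same bounds from C\'ea's lemma, the $N$-uniform Lipschitz estimate, Proposition~\ref{prop:appx}, and the $\ell^2$ bound on the iterate coefficients. The only differences are cosmetic: you spell out the choice of $M_0$ and $\tbeta$ a bit more explicitly, and you bound $\|\bc\|_{\ell^2}$ by the cruder constant $\tZ=2+\alpha/(\alpha-\tbeta)$ from Lemma~\ref{lemma:Phiit} rather than via the limit of \eqref{eq:iterative-bounds-b}, which is equally valid since term $(IV)$ only needs an $N$- and $\epsapp$-independent bound on $\|\bc\|_{\ell^2}$.
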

  \begin{proof}
    By Item \ref{item:base-conv} of Setting
    \ref{set:base} and by \eqref{eq:encoder-conv}, there exists $M_0\in
    \N$ such that there exists $\tbeta\in (0,\alpha)$ such that, 
     for all $M\geq M_0$,
    \begin{equation*}
      \cE_M (a)\cdot \Xi_M\in \cD_{\alpha, \tbeta}, \qquad \forall a\in \cDab.
    \end{equation*}
    Choose next $\Phi_{N, M, \epsapp} = \Phiappx_{N, M, \alpha, \tbeta, \epsapp}$, 
    where the NN $ \Phiappx_{N, M, \alpha, \tbeta, \epsapp}$ 
    on the right hand side is as in Proposition~\ref{prop:appx}.
    We write 
    \begin{align*}
      &\| \cS(a) - \cG_{N, M, \epsapp, \epsrec}(a) \|_Y
    \\   &\qquad\qquad
        \leq
        \begin{aligned}[t]
        &\| \cS(a) - \cS^{Y_N}(a) \|_Y + \| \cS^{Y_N}(a) - \cS^{Y_N}(\cE_M(a)\cdot\Xi_M) \|_Y
   \\   &\qquad  + \|\cS^{Y_N}(\cE_M(a)\cdot\Xi_M)  - \left( \Realiz(\Phiappx_{N, M, \alpha, \tbeta, \epsapp})  \circ \cE_M \right)(a)\cdot \Psi_N\|_Y
        \end{aligned}
      \\
      &\qquad\qquad\eqqcolon (I)+  (II) + (III).
    \end{align*}
    We bound the four terms. For term $(I)$, by the Lax-Milgram lemma,
    \begin{equation*}
     \sup_{a\in \cKab} (I)   \leq \frac{\alpha+\beta}{\alpha-\beta} \sup_{a\in\cKab}\dist_Y(\cS(a), Y_N).
    \end{equation*}
    For term $(II)$, since $\cS^{Y_N}$ is Lipschitz continuous,
    \begin{equation*}
      (II) \leq \Clip \| a - \cE_M(a)\cdot \Xi_M \|_{L^\infty(\Omega)} 
           \leq \Clip \sup_{a\in \cKab}\|  a  - \cE_M(a)\cdot \Xi_M \|_{L^\infty(\Omega)} ,
    \end{equation*}
    with constant $\Clip$ that depends on $\alpha, \tbeta$ (hence $\beta$ and $M_0$) and $f$.
    By Proposition \ref{prop:appx},
    \begin{equation*}
      (III) \leq \epsapp.
    \end{equation*}
  \end{proof}
\section{Approximation rates for PDEs in polytopes}
\label{sec:polytopes}
We develop the preceding, abstract framework 
for the special case of the model,
linear elliptic PDE \eqref{eq:Ldivagrad} below, in a polytopal domain $\Omega$.
The ensuing arguments 
extend with minor modifications to other linear, divergence-form elliptic PDEs.
We consider here neural operators with the deepOnet architecture introduced in
\cite{lu2020deeponet}, i.e., such that, for $a\in\cD_{\alpha, \beta}$,
\begin{equation}
  \label{eq:deepONet-arch}
  \cG(a) : x\mapsto \bRealiz\left( \Phiappx \right)(a(\bX))\cdot\bRealiz(\Phirec)(x),
\end{equation}
where $\bX$ is a set of points in $\Omega$. We bound the number of points $\bX$,
and the sizes of the NNs $\Phirec$, $\Phiappx$ with respect to the approximation error.
We show algebraic convergence rates for data with finite Sobolev regularity, and
exponential convergence for analytic data.

Let $d\in \{2, 3\}$ and suppose that $\Omega\subset\R^d$ is an open bounded
set with Lipschitz boundary; additional assumptions on $\Omega$ will be made
throughout the section.
We consider in this section 
linear, elliptic, second order, divergence-form PDEs 
as specified in the following setting.
\begin{setting}
  \label{setting:base-polytope}
  Let $X=L^\infty(\Omega)$, $Y = H_0^1(\Omega)$, and
\begin{equation}
  \label{eq:Ldivagrad}
  L(a) u = -\nabla\cdot(a\nabla u)
\end{equation}
with associated bilinear form 
${\frab}(a; u, v) = (a\nabla u, \nabla v)_{L^2(\Omega)}$. 
The solution operator $\cS : \cDab \to Y$ satisfies
\begin{equation}
  \label{eq:sol-polytope}
 {\frab}(a; \cS(a), v) = \langle f, v \rangle\qquad \forall v\in Y.
\end{equation}
\end{setting}
\begin{remark}\label{rmk:MixBCs}
Setting~\ref{setting:base-polytope}
assumes homogeneous Dirichlet BCs on all of $\partial\Omega$.
All results that follow remain valid for the PDE $L(a)u = f$ 
with homogeneous mixed boundary conditions: there exists a 
partition $\{ \Gamma_D, \Gamma_N \}$ of $\partial\Omega$ 
into open sets such that 
$\partial\Omega = \overline{\Gamma_D \cup \Gamma_N}$, 

with surface measures 
$|\Gamma_D|>0$, $|\Gamma_N| \geq 0$, 
and such that
$$
u|_{\Gamma_D} = 0 \;, \qquad {\nu} \cdot a\nabla u|_{\Gamma_N} = 0 \;.
$$
Here, ${\nu}\in L^\infty(\Gamma_N; \R^d)$  
denotes the outward-pointing unit normal vector to $\partial\Omega$.
In this case, 
\begin{equation}\label{eq:Xmix}
Y = H^1_{\Gamma_D}(\Omega) \coloneqq \{ v\in H^1(\Omega) : \; v|_{\Gamma_D} = 0 \}.
\end{equation}
The data-to-solution map for \eqref{eq:sol-polytope} is still holomorphic,
with the stated (complex extension of) $Y$.
\end{remark}
\subsection{Exponential convergence for analytic data}
\label{sec:ExpCnvAnDat}
In this section, we discuss the case where the data set 
consists of analytic functions in $\overline{\Omega}$.
In this case, we can prove the existence of neural operators
with the architecture \eqref{eq:deepONet-arch}
that converge exponentially to the
solution operator.

Throughout the section, we assume the following analytic data setting.
\begin{setting}
  \label{set:analytic} {[Analytic Data]}
Assume Setting \ref{setting:base-polytope} and that
the domain $\Omega$ is a polygon with a finite number of straight sides if $d=2$
and it is an axiparallel polyhedron if $d=3$.
Finally, 
assume that there exist $0<\beta<\alpha<\infty$ and $A>0$
such that
  \begin{equation}
    \label{eq:analytic-cD}
    \cKab = \left\{ v\in \cDab :   \|  v\|_{W^{m, \infty}(\Omega)} \leq A^{m+1}m!,\, \forall m\in \N\right\}.
  \end{equation}
\end{setting}
\begin{remark}
  By ``axiparallel polyhedron'' we indicate a polyhedron whose edges are
  parallel to the coordinate axes. 
\end{remark}
\subsubsection{Encoder}
\label{sec:encoding-op}
We give an existence result for the 
encoder operator based on point evaluations, with 
quantitative bounds on the dimension of its image.
\begin{lemma}\label{lemma:point-encoder-analytic}
Assume Setting \ref{set:analytic}. 
Then, for all $\epsilon \in (0,1)$, 
exist
$M_\epsilon$ points $\bX = \{\bx_1, \dots, \bx_{M_\epsilon}\}\subset \overline\Omega$, 
and functions
$\Xi = \{\xi_1, \dots, \xi_{M_\epsilon}\} \in L^\infty(\Omega)^{M_\epsilon}$ such that,
\begin{equation*}
  \sup_{a\in\cKab}\| a - a(\bX)\cdot\Xi \|_{L^\infty(\Omega)} \leq \epsilon, 
  \qquad 
  \text{with }a(\bX) = \{a(\bx_1), \dots, a(\bx_{M_\epsilon})\},
\end{equation*}
and $M_\epsilon = \cO(\left| \log\epsilon \right|^{d})$.
\end{lemma}
\begin{proof}
Construct a regular mesh of convex quadrilaterals in $\Omega$ (see Appendix \ref{app:NwdthAn}
for a concrete construction in an arbitrary polytopal domain $\Omega \subset \R^d$).
It implies that 
$M_\epsilon = \cO(\left| \log\epsilon \right|^{d})$ 
and that the points $\bx_1, \dots, \bx_{M_\epsilon}$ 
can be chosen as mapped tensor product Gauss-Legendre-Lobatto nodes,
of order $p = O(|\log \epsilon|)$.
  The functions $\{\xi_1, \dots, \xi_{M_{\epsilon}}\}$ 
  are the corresponding high-order Lagrange basis of the continuous, piecewise polynomial
  functions subject to a partition into convex quadrilaterals in $\Omega$. 
\end{proof}

\subsubsection{Decoder}
\label{sec:decoding-op}
  We start by considering approximation rates for decoders with only $\ReLU$ activation. 
  As a consequence, the realizations of the
  functions in the decoder cannot be exact, continuous piecewise polynomial
  of degree $\geq 2$ \footnote{$\mathbb{P}_1$-Lagrangian FEM on arbitrary regular, simplicial tesselations in any dimension 
  can be exactly represented through ReLU NNs, see \cite{Longo2023}.} functions.
  We recall a result from \cite{MOPS2023}.
  \begin{lemma}
    \label{lemma:rec-analytic}
Assume Setting \ref{set:analytic} and let $\rho = \ReLU$. 
There exists $C>0$ such that for all $\epsilon\in (0,1)$, 
there exists $N\in\N$ satisfying $N\leq C(\left| \log\epsilon \right|^{2d}+1)$ and a NN $\Phirec_N$,
with $\bRealiz_{\rho}(\Phirec_N)\in Y^N$ such that
  \begin{equation*}
    \dist_Y(\cU, \spn(\bRealiz_\rho(\Phirec_N)) \leq \epsilon
  \end{equation*}
  and
  \begin{equation*}
    \qquad
    \depth(\Phirec_N) \leq C(\left| \log\epsilon \right|\log\left|  \log\epsilon\right|+1)
    \qquad
    \size(\Phirec_N) \leq C(\left| \log\epsilon \right|^{2d+1}+1).
  \end{equation*}
  \end{lemma}
\begin{proof}
  This follows from Lemma \ref{lemma:nwidth-exp} and the construction
  of the approximating networks in the proof of \cite[Theorem 4.2]{MOPS2023}.
  In the notation of the latter, the NNs $\Phi_{\epsilon, c}$ are linear
  combinations of $N = N_{\mathrm{1d}}^d$ functions,
  with $N_{\mathrm{1d}}\lesssim 1+\left| \log\epsilon \right|^2$. The bounds on
  the size of the networks are those of \cite[Theorem 4.2]{MOPS2023}. The
  bound on the approximation error follows as in \cite[Theorem 4.3]{MOPS2023}.
\end{proof}

\begin{remark}
  \label{remark:relu-dec}
  The networks introduced in \cite{MOPS2023} are based on tensor product,
  piecewise polynomial approximants. This choice implies that NNs as those in 
  Lemma \ref{lemma:rec-analytic} can approximate functions that have singularities
  on the whole boundary $\partial\Omega$, as, e.g., the solutions of fractional
  Poisson problems \cite{Faustmann2025}. On the other side,
  it also implies that for the equation
  considered here, whose solutions have singularities only along corners and
  (when $d=3$) edges, the exponent $2d$ of Lemma \ref{lemma:rec-analytic} is suboptimal 
  (compare with the exponent $d+1$ in Lemma \ref{lemma:decoder-ReLU2-analytic} below).
\end{remark}
We consider approximation rates for decoders with 
$\ReLU$ and $\ReLU^2$ activation functions. Since proving expression rates for decoders
with these activations is out of the scope of this paper and such results are
only available for polygons, we only consider the two-dimensional case here.
\begin{lemma}
  \label{lemma:decoder-ReLU2-analytic}
  Assume Setting \ref{set:analytic} with $d=2$. Then, there exists $C>0$ such
  that for all $\epsilon\in (0,1)$, there exists $N\in\N$ satisfying $N\leq
  C(\left| \log\epsilon \right|^{d+1}+1)$,
  a NN $\Phirec_N$
  with $\bRealiz_{\brho}(\Phirec_N)\in Y^N$,
  and $\brho\in \{\ReLU, \ReLU^2\}^{\depth(\Phirec_N)-1}$,
  such that
  \begin{equation*}
    \dist_Y(\cU, \spn(\bRealiz_\brho(\Phirec_N)) \leq \epsilon
  \end{equation*}
  and
  \begin{equation*}
    \qquad
    \depth(\Phirec_N) \leq C(\log\left|  \log\epsilon\right|+1)
    \qquad
    \size(\Phirec_N) \leq C(\left| \log\epsilon \right|^{d+1}+1).
  \end{equation*}
\end{lemma}
\begin{proof}
  This follows from Lemma \ref{lemma:nwidth-exp} and 
  \cite[Proposition 4.5 and Theorem 5.1]{Opschoor2024}.
\end{proof}

%
\subsubsection{Full operator}
\label{sec:fullONanalytic}
%
We combine Proposition \ref{prop:general-ON} with the results of Sections
\ref{sec:Nwidth-polytope}, \ref{sec:encoding-op}, \ref{sec:decoding-op} to
show expression rates for the neural operator approximation of solution sets of
\eqref{eq:Ldivagrad} 
in polytopal domains $\Omega$, with source terms $f$ which 
are analytic in $\overline{\Omega}$.
\begin{theorem}
  \label{th:fullONanalytic}
 Assume Setting \ref{set:analytic} and $\rho=\ReLU$. 

 Then, for all $\epsilon \in (0,1)$, 
 there exist dimensions $N(\epsilon), M(\epsilon) \in \N$,
 points $\bX_\epsilon = \{\bx_1, \dots, \bx_{M}\}\subset\overline{\Omega}$
 and operators 
 \begin{equation*}
   \cG_\epsilon:
     \begin{cases}
     \cKab\to Y \\
   a\mapsto  \left(\bRealiz_{\rho}(\Phiappx_{\epsilon}) \left( a(\bX_\epsilon) \right)  \right)\cdot\bRealiz_{\rho}(\Phirec_{\epsilon}) 
     \end{cases}
 \end{equation*}
 where 
 $\bRealiz_{\rho}(\Phirec_{\epsilon}) : \Omega \to \R^N$
 and
 $\bRealiz_{\rho}(\Phiappx_\epsilon) : \R^M\to \R^N$,
 satisfying
 \begin{equation*}
   \| \cS - \cG_{\epsilon} \|_{L^\infty(\cKab, Y)} \leq \epsilon.
 \end{equation*}
In addition, as $\epsilon\to 0$,
 $M(\epsilon)  = \cO(\left|\log\epsilon\right|^d)$, 
 $N(\epsilon)  = \cO(\left| \log\epsilon \right|^{2d})$,
 and
\begin{align*}
   &\depth(\Phiappx_{\epsilon})  = \cO\left( \left| \log\epsilon \right|^2 \right),\qquad
   \size(\Phiappx_{\epsilon}) = \cO\left(   \left| \log\epsilon \right|^{5d} \right),
   \\
   &\depth(\Phirec_{\epsilon}) = \cO\left( \left| \log\epsilon \right|\log\left| \log\epsilon \right|\right),
   \qquad
   \size(\Phirec_{\epsilon}) =\cO\left( \left| \log\epsilon \right|^{2d+1} \right).
 \end{align*}
\end{theorem}
\begin{proof}
  We specify the components in the architecture \eqref{eq:G=RAE}.

  \noindent
  1. Decoder $\cR$: 
  From Lemma \ref{lemma:rec-analytic}, 
  it follows that for all 
  $\epsrec\in (0,1)$ there exists a NN $\Phirec_{N}$ 
  of output dimension $N \lesssim \left|\log\epsrec\right|^{2d}$ 
  such that
  \begin{equation*}
    \dist_{H^1_0(\Omega)}(\cU, \spn(\bRealiz_\rho(\Phirec_N)) \leq \epsrec
  \end{equation*}

\noindent
  2. Encoder $\cE$:
  by Lemma \ref{lemma:point-encoder-analytic}, 
  there exist $\cE_M:  a\mapsto a(\bX_\epsilon) \in\R^M$ and $\Xi_M \in
  L^\infty(\Omega)^M$ such that
  \begin{equation*}
     \sup_{a\in \cKab}\| \cE_M(a) \cdot \Xi_M -a \|_{L^\infty(\Omega)} \leq \epsilon/3,
  \end{equation*}
  if $M \simeq \left| \log\epsilon \right|^d$.
  
 \noindent
 3. Approximator $\cA$:  
  Using Proposition \ref{prop:general-ON} with
  \begin{equation*}
    \Psi_N = \bRealiz_\rho(\Phirec_N), \qquad \epsrec = \epsilon/3,\qquad \epsapp=\epsilon/3
  \end{equation*}
  concludes the proof.
\end{proof}

When decoders with both $\ReLU$ and $\ReLU^2$ activations are considered, 
the preceding result can be improved. 
As in Lemma \ref{lemma:decoder-ReLU2-analytic}, 
we consider here only the case $d=2$.
\begin{theorem}
  \label{th:fullONanalytic-ReLU2}
 Assume Setting \ref{set:analytic} with $d=2$, $\rho=\ReLU$, and $\brho=\{\ReLU, \ReLU^2\}$. 

 Then, for all $\epsilon \in (0,1)$, 
 there exist dimensions $N(\epsilon), M(\epsilon) \in \N$,
 points $\bX_\epsilon = \{\bx_1, \dots, \bx_{M}\}\subset\overline{\Omega}$
 and operators
 \begin{equation*}
   \cG_\epsilon:
     \begin{cases}
     \cKab\to Y \\
   a\mapsto  \left(\bRealiz_{\rho}(\Phiappx_{\epsilon}) \left( a(\bX_\epsilon) \right)  \right)\cdot\bRealiz_{\brho}(\Phirec_{\epsilon}) 
     \end{cases}
 \end{equation*}
 where $\brho\in \{\ReLU, \ReLU^2\}^{\depth(\Phirec_{\epsilon})-1}$,
 $\bRealiz_{\brho}(\Phirec_{\epsilon}) : \Omega \to \R^N$,
 and
 $\bRealiz_{\rho}(\Phiappx_\epsilon) : \R^M\to \R^N$,
 satisfying
 \begin{equation*}
   \| \cS - \cG_{\epsilon} \|_{L^\infty(\cKab, Y)} \leq \epsilon.
 \end{equation*}
In addition, as $\epsilon\to 0$,
 $M(\epsilon)  = \cO(\left|\log\epsilon\right|^d)$, 
 $N(\epsilon)  = \cO(\left| \log\epsilon \right|^{d+1})$,
 and
\begin{align*}
   &\depth(\Phiappx_{\epsilon})  = \cO\left( \left| \log\epsilon \right|^2 \right),\qquad
   \size(\Phiappx_{\epsilon}) = \cO\left(   \left| \log\epsilon \right|^{3d+2} \right),
   \\
   &\depth(\Phirec_{\epsilon}) = \cO\left( \log\left| \log\epsilon \right|\right),
   \qquad
   \size(\Phirec_{\epsilon}) =\cO\left( \left| \log\epsilon \right|^{d+1} \right).
 \end{align*}
\end{theorem}
\begin{proof}
  The proof is the same as that of Theorem \ref{th:fullONanalytic}, with Lemma
  \ref{lemma:decoder-ReLU2-analytic} replacing Lemma \ref{lemma:rec-analytic}.
\end{proof}

\subsection{Algebraic convergence rates for data with finite regularity in polygons}
\label{sec:algconv}
In this section we consider the case where the data is contained in a ball of
finite Sobolev regularity and the domain is a plane polygon. 
\begin{setting}
  \label{set:finite-reg}
  Assume Setting \ref{setting:base-polytope} and, in addition,
  let $d=2$ and the domain $\Omega$ be a polygon with a finite number of sides.
  Assume that there exist $0<\beta<\alpha<\infty$, $R>0$, and $m\in \N$, $m\geq 2$ such that
    \begin{equation*}
     \cKabm = \cDab \cap \left\{ v\in W^{m, \infty}(\Omega) : \| v \|_{W^{m, \infty}(\Omega)}\leq R\right\}.
    \end{equation*}
\end{setting}
\subsubsection{Encoder}
\label{sec:Encod}
\begin{lemma}
  \label{lemma:enc-finite-1}
  Assume Setting \ref{set:finite-reg}. Then, for all $\epsilon \in (0,1)$, there exist
  $M_\epsilon$ points $\bX = \{\bx_1, \dots, \bx_{M_\epsilon}\} \subset \overline{\Omega}$ and functions
  $\Xi = \{\xi_1, \dots, \xi_{M_\epsilon}\} \in L^\infty(\Omega)^{M_\epsilon}$ 
  such that
  \begin{equation*}
    \sup_{a\in\cKabm}\| a - a(\bX)\cdot\Xi \|_{L^\infty(\Omega)} \leq \epsilon
    \;\; \mbox{ and } \;\; 
    M_\epsilon = \cO(\epsilon^{-2/m}) \;.
  \end{equation*}
\end{lemma}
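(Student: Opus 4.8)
The plan is to approximate the admissible coefficients in $L^\infty(\Omega)$ by continuous piecewise polynomials of degree $m-1$ on a quasi-uniform mesh of the polygon $\Omega$, and to control the approximation error by the classical nodal interpolation estimate. The resulting encoder will simply read off nodal point values of $a$.

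First I would observe that, since $m\geq 2$, the Sobolev embedding theorem on the Lipschitz polygon $\Omega$ gives $W^{m,\infty}(\Omega)\hookrightarrow C^{1}(\clOmega)$; in particular every $a\in\cD$ is continuous on $\clOmega$, so pointwise evaluations of $a$ are well defined. Since $\Omega\subset\R^2$ is a polygon with straight sides, it admits, for each $h\in(0,1]$, a shape-regular quasi-uniform simplicial mesh $\cT_h$ of $\clOmega$ with maximal element diameter at most $h$ and with $\cO(h^{-2})$ elements (alternatively, one may use the mesh of convex quadrilaterals constructed in Appendix~\ref{app:NwdthAn} with fixed polynomial degree). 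Let $V_h\subset C(\clOmega)$ denote the space of continuous piecewise polynomials of degree $\leq m-1$ subordinate to $\cT_h$, set $M\coloneqq\dim V_h = \cO(h^{-2})$, let $\{z_1,\dots,z_M\}\subset\clOmega$ be the associated Lagrange nodes, and let $\Xi=\{\xi_1,\dots,\xi_M\}\subset L^\infty(\Omega)$ be the Lagrange nodal basis of $V_h$.

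Next I would take as encoder the \emph{linear} map $\cE:\cD\to\R^{M}$, $\cE(a)=(a(z_1),\dots,a(z_M))$, so that $\cE(a)\cdot\Xi = I_h a\in V_h$ is the nodal interpolant of $a$. A standard affine-scaling / Bramble--Hilbert argument (see, e.g., \cite{SchBook}) produces a constant $C=C(m,\Omega)>0$, independent of $h$ and of $a$, with
\begin{equation*}
  \| a - I_h a \|_{L^\infty(\Omega)} \leq C h^{m}\,|a|_{W^{m,\infty}(\Omega)} \leq C h^{m}\,\| a \|_{W^{m,\infty}(\Omega)} \leq C R\, h^{m},
\end{equation*}
where the last bound uses the definition of $\cD$ in Setting~\ref{set:finite-reg}. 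Choosing $h$ as the largest admissible mesh parameter with $C R h^{m}\leq\epsilon$, i.e. $h\simeq(\epsilon/(CR))^{1/m}$, yields $\sup_{a\in\cD}\| a-\cE(a)\cdot\Xi\|_{L^\infty(\Omega)}\leq\epsilon$ and $M_\epsilon=\dim V_h=\cO(h^{-2})=\cO(\epsilon^{-2/m})$, the asserted bound.

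The only point that needs a little care is the uniformity in $h$ (and over $a\in\cD$) of the interpolation constant $C$; this is routine once the family $\{\cT_h\}$ is fixed shape-regular, and is the main technical ingredient. In contrast to the solution set $\cU$, which near the corners of $\Omega$ is only weighted-analytic and would therefore require graded mesh refinement (cf. Lemma~\ref{lemma:FinReg2d}), the coefficients $a\in\cD$ enjoy \emph{global} $W^{m,\infty}$ regularity, so no corner refinement is necessary and a quasi-uniform mesh already delivers the full algebraic rate $h^{m}$, hence the exponent $-2/m$ in $M_\epsilon$.
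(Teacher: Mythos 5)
Your proposal is correct and follows essentially the same route as the paper: the paper's (very terse) proof also invokes piecewise-polynomial approximation on quasi-uniform triangulations of the polygon to bound $d_N(\cD;L^\infty(\Omega))$. You simply flesh out the details — nodal interpolation of degree $m-1$, the Bramble--Hilbert estimate $\|a - I_h a\|_{L^\infty(\Omega)} \lesssim h^m \|a\|_{W^{m,\infty}(\Omega)}$, and the dimension count $M = \cO(h^{-2})$ — and make the encoder explicit as a point-evaluation map, which the paper leaves implicit.
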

\begin{proof}
The assertion
follows from (known) rates of approximation by piecewise polynomial functions 
on a sequence $\{ \cT_n \}_{n\geq 1}$ of regular, 
quasi-uniform triangulations $\cT_n$ of $\Omega$.
\end{proof}
\subsubsection{Decoder}
\label{sec:DecOp}
The solution set $\cU$ in Setting~\ref{set:finite-reg} is included in a 
finite-order, corner-weighted Sobolev space.
Approximation rates for
the decoder are a consequence of the results in \cite{Opschoor2024}.
\begin{lemma}>
  \label{lemma:dec-finite}
  Assume Setting \ref{set:finite-reg}. 

  Then, there exists $C>0$ such that
  for all $\epsilon\in (0,1)$, 
  there exist $N\in \N$ satisfying $N\leq C\epsilon^{-2/m}$
  and a NN $\Phirec_{N}$, 
  such that
  \begin{equation*}
    \dist_Y(\cU, \spn(\bRealiz_\brho(\Phirec_N))) \leq \epsilon
  \end{equation*}
  with $\brho\in \{\ReLU, \ReLU^2\}^{\depth(\Phirec_{\epsilon})-1}$
  and
  \begin{equation*}
    \depth(\Phirec_{N}) \leq C,
    \qquad
    \size(\Phirec_{N}) \leq C \epsilon^{-4/m}.
  \end{equation*}
\end{lemma}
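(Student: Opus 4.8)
The plan is to build $\Phirec_\epsilon$ in two stages: first replace each snapshot $\psi_i$ by a continuous, piecewise polynomial finite element approximant on a corner-graded triangulation of $\Omega$, exploiting classical elliptic regularity in Kondrat'ev scales and the known $h$-FEM convergence rate; then emulate this \emph{fixed} finite element space \emph{exactly} by a network with activations in $\{\ReLU^r,\BiSU\}$ of bounded depth, in the spirit of the constructions underlying \cite{OS24_1093,MOPS2023}. The two error contributions combine additively, and, since the mesh (hence the bulk of the network) is common to all $N+1$ outputs, only a small, coefficient-dependent part grows with $N$.

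First I would observe that, by construction in Section~\ref{sec:basis}, each $\psi_i\in\Psi_N$ lies in $\cU=\cS(\cD)$, i.e.\ it solves \eqref{eq:Ldivagrad}--\eqref{eq:sol-polytope} with the fixed right-hand side $f$ and some coefficient $a\in\cD$. Under Setting~\ref{set:finite-reg} all these coefficients lie in one fixed ball of $W^{m,\infty}(\Omega)$ (indeed it suffices that they lie in the larger corner-weighted space of Remark~\ref{rmk:EllReg2d}), so Lemma~\ref{lemma:FinReg2d} together with \cite[Thm.~1.1 and 4.4]{BNL17} yields a single $\theta>0$, depending only on $\Omega$ and on $f$, such that
\begin{equation*}
\psi_i\in Y\cap\calK^{m+1}_{\theta+1}(\Omega),
\qquad
\|\psi_i\|_{\calK^{m+1}_{\theta+1}(\Omega)}\le C_0
\quad\text{for all }i\in\brangezero{N},\ N\in\N,
\end{equation*}
with $C_0=C_0(\alpha,\beta,R,m,f,\Omega)$ independent of $i$ and $N$. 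Fixing a sequence $\{\cT_h\}$ of regular triangulations with appropriate geometric grading towards the corners and letting $V^m_h\subset Y$ be the associated degree-$m$ Lagrangian finite element space, \cite[Thm.~4.4, Eq.~(19)]{BNZ2d} (as already invoked in Lemma~\ref{lemma:FinReg2d}) gives, uniformly over $\cU$,
\begin{equation*}
\inf_{v_h\in V^m_h}\|\psi_i-v_h\|_{H^1_0(\Omega)}\le C\,(\dim V^m_h)^{-m/2}\,\|\psi_i\|_{\calK^{m+1}_{\theta+1}(\Omega)}\le C\,C_0\,(\dim V^m_h)^{-m/2}.
\end{equation*}
Choosing $\dim V^m_h\simeq\epsilon^{-2/m}$ (so $\dim V^m_h=\cO(\epsilon^{-2/m})$) makes the right-hand side $\le\epsilon/2$ for all $i$; write $\psi_i^h\in V^m_h$ for the best approximants and, in the nodal basis $\{\phi_1,\dots,\phi_{\dim V^m_h}\}$ of $V^m_h$, $\psi_i^h=\sum_j(\bc_i)_j\phi_j$ with $\bc_i\in\R^{\dim V^m_h}$.

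It then remains to realize $\bx\mapsto(\psi_0^h(\bx),\dots,\psi_N^h(\bx))$ by a network $\Phirec_\epsilon$ with the claimed parameters. Here I would appeal to the exact emulation of standard finite element spaces by deep networks on a regular simplicial mesh in $\R^2$: each nodal basis function $\phi_j$, being continuous and piecewise in $\mathbb{P}_m$, is represented exactly by a subnetwork with activations from $\{\ReLU^r,\BiSU\}$ — $r$ chosen so that $\ReLU^r$, composed $\cO(1)$ times, reproduces $\mathbb{P}_m$ on an element, with $\BiSU$ used for element localization — of depth $\cO(1)$; cf.\ \cite{OS24_1093} and the $hp$-counterpart \cite{MOPS2023}. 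Assembling these into one network that outputs all $\phi_j(\bx)$ simultaneously costs $\cO((\dim V^m_h)^2)=\cO(\epsilon^{-4/m})$ in size, and composing with the linear output map $(\phi_j(\bx))_j\mapsto\big(\sum_j(\bc_i)_j\phi_j(\bx)\big)_{i=0}^{N}$ adds $\cO(N\dim V^m_h)=\cO(N\epsilon^{-2/m})$. This gives $\depth(\Phirec_\epsilon)\le C$ and $\size(\Phirec_\epsilon)\le C\epsilon^{-2/m}(N+\epsilon^{-2/m})$, with $C$ independent of $N$ since the $\phi_j$-emulation does not depend on $N$. As the emulation is exact, $[\bRealiz_{\brho}(\Phirec_\epsilon)]_i=\psi_i^h$, whence $\max_{i\in\brangezero{N}}\|\psi_i-[\bRealiz_{\brho}(\Phirec_\epsilon)]_i\|_Y\le\epsilon/2\le\epsilon$.

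The step I expect to be the main obstacle is the exact network emulation on the \emph{graded} mesh. Because $\cT_h$ is strongly refined towards the corners of $\Omega$, the element-localization and nodal-basis subnetworks must cope with element diameters spanning many orders of magnitude, and one has to verify that this is still achievable with \emph{bounded} depth while the size stays quadratic in $\dim V^m_h$; one must also check that the $\BiSU$-based localization preserves global $H^1$-conformity, i.e.\ that the emulated $\phi_j$ are genuinely continuous across inter-element boundaries. A minor, purely bookkeeping point is the uniformity of all constants in $N$, which follows from the product structure (a shared mesh/basis subnetwork followed by an $N$-dependent linear output layer) described above.
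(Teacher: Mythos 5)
Your proof is correct and takes essentially the same route as the paper: corner-graded FE approximation with rate $(\dim V^m_h)^{-m/2}$ via Lemma~\ref{lemma:FinReg2d}, exact NN emulation of the fixed FE basis by a bounded-depth network with $\{\ReLU^r,\BiSU\}$ activations, and an $N$-dependent linear output layer contributing $\cO(N\,\dim V^m_h)$ to the size. The obstacle you flag — exact, $H^1$-conforming emulation of continuous piecewise polynomials on a graded simplicial mesh with depth $\cO(1)$ and size $\cO(K^2)$ — is precisely what the paper invokes \cite[Lemma 7.2]{Longo2023} for, so it is a matter of citing the right result rather than a genuine gap.
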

\begin{proof}
  By the argument of the proof of Lemma \ref{lemma:FinReg2d}, there exists a
  finite element space $V_h^m$ of continuous, piecewise polynomial functions of total 
  degree $m$, defined on a regular, simplicial partition $\cT$ of $\Omega$, 
  such that
  \begin{equation*}
    \sup_{a \in \cKabm}
    \inf_{v_h \in V^m_h}
    \| u^a - v_h \|_{H^1(\Omega)}
    \leq
    \tC
    (\dim(V^m_h))^{-m/2}
    \;,
  \end{equation*}
  with $\tC$ that depends on $m, R, t, \Omega$.
  We conclude using \cite[Proposition 4.6]{Opschoor2024}.
  \end{proof}
\subsubsection{Full operator}
\label{sec:OpN}
\begin{theorem}
 \label{th:fullONfinite-reg}
 Assume Setting \ref{set:finite-reg}. 
Then, for all $\epsilon \in (0,1)$, 
 there exist dimensions $N(\epsilon), M(\epsilon) \in \N$,
 points $\bX_\epsilon = \{\bx_1, \dots, \bx_{M}\}\subset\overline{\Omega}$,
 and operators
 \begin{equation*}
   \cG_{\epsilon} :
   \begin{cases}
   \cKabm\to Y \\
   a\mapsto  \left(\bRealiz_{\rho}(\Phiappx_{\epsilon}) \left( a(\bX_\epsilon) \right)  \right)\cdot\bRealiz_{\brho}(\Phirec_{\epsilon}) 
   \end{cases}
 \end{equation*}
where $\brho\in \{\ReLU, \ReLU^2\}^{\depth(\Phirec_{\epsilon})-1}$,
 $\bRealiz_{\brho}(\Phirec_{\epsilon}) : \Omega \to \R^{N}$,
 and ${\bRealiz_{\ReLU}}(\Phiappx_\epsilon) : \R^{M}\to \R^{N}$,
 satisfying
 \begin{equation*}
   \| \cS - \cG_{\epsilon} \|_{L^\infty(\cKabm, Y)} \leq \epsilon.
 \end{equation*}
As $\epsilon\to 0$, the encoder and decoder sizes are
 ${M(\epsilon)}  = \cO(\epsilon^{-2/m})$, ${N(\epsilon)} = \cO(\epsilon^{-2/m})$, 
and 
\begin{align*}
   &\depth(\Phiappx_{\epsilon})  = \cO\left( \left| \log\epsilon \right|^2 \right),\qquad
   \size(\Phiappx_{\epsilon}) = \cO\left(   \epsilon^{-6/m} \right),
   \\
   &\depth(\Phirec_{\epsilon}) = \cO\left( 1\right),
   \qquad
   \size(\Phirec_{\epsilon}) =\cO\left(  \epsilon^{- 4/m}\right).
 \end{align*}
\end{theorem}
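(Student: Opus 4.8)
The plan is to mirror the proof of Theorem~\ref{th:fullONanalytic}: assemble the three ingredients required by Proposition~\ref{prop:general-ON} and then track the powers of $\epsilon$ through the composition. First I would invoke Lemma~\ref{lemma:FinReg2d} together with Lemma~\ref{lemma:greedy} to get $\delta_N(\cU,Y)\leq\tilde C N^{-m/2}$, which fixes the latent dimension as $N_\epsilon\simeq\epsilon^{-2/m}$ so that $\delta_{N_\epsilon}(\cU,Y)\lesssim\epsilon$. Next, Lemma~\ref{lemma:enc-finite-1} provides an encoder $\cE_{M_\epsilon}$ with $\sup_{a\in\cD}\|\cE_{M_\epsilon}(a)\cdot\Xi_{M_\epsilon}-a\|_{L^\infty(\Omega)}\leq\epsilon$ using $M_\epsilon\simeq\epsilon^{-2/m}$ channels; this in particular verifies the consistency requirement \eqref{eq:encoder-conv} of Setting~\ref{set:base}, so that Proposition~\ref{prop:general-ON} applies with $X=L^\infty(\Omega)$.

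For the decoder I would use the network $\Phirec$ of Lemma~\ref{lemma:dec-finite}: for any internal target accuracy it realizes the feature functions $\{\psi_0,\dots,\psi_{N_\epsilon}\}$ component by component with $\{\ReLU^r,\BiSU\}$ activations. Since Proposition~\ref{prop:general-ON} needs the $\ell^2$-aggregated error $\|\,\|\Psi_{N_\epsilon}-\bRealiz_{\brho}(\Phirec)\|_{\ell^2}\,\|_Y$, which picks up a factor $(N_\epsilon+1)^{1/2}\simeq\epsilon^{-1/m}$, I would request componentwise accuracy $\simeq\epsilon/(N_\epsilon+1)^{1/2}\simeq\epsilon^{1+1/m}$, so that the aggregated decoder error is $\lesssim\epsilon$; this furnishes the family $\Theta_{\epsrec,N}$ of Setting~\ref{set:base}(2) with width parameter $\lesssim\epsilon$. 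Taking also $\epsapp\simeq\epsilon$ in the approximator $\Phiappx_\epsilon=\Phi_{N_\epsilon,M_\epsilon,\epsapp}$ from Proposition~\ref{prop:general-ON}, each of the four error contributions in its estimate (the encoder error, $\delta_{N_\epsilon}(\cU,Y)$, $\epsapp$, and the aggregated decoder error) is $\lesssim\epsilon$; rescaling $\epsilon$ by a fixed constant then gives $\|\cS-\cG_\epsilon\|_{L^\infty(\cD;Y)}\leq\epsilon$ with $\cG_\epsilon$ of the claimed encoder-approximator-decoder shape.

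It remains to read off the sizes. From Proposition~\ref{prop:general-ON}, $\depth(\Phiappx_\epsilon)=\cO(|\log\epsapp|(|\log\epsapp|+\log N_\epsilon))=\cO(|\log\epsilon|^2)$ and $\size(\Phiappx_\epsilon)=\cO(N_\epsilon^2(|\log\epsapp|^2+\log N_\epsilon\,|\log\epsapp|+M_\epsilon))$; since $N_\epsilon\simeq M_\epsilon\simeq\epsilon^{-2/m}$, the term $M_\epsilon$ dominates the bracket and $\size(\Phiappx_\epsilon)=\cO(\epsilon^{-4/m}\epsilon^{-2/m})=\cO(\epsilon^{-6/m})$. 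From Lemma~\ref{lemma:dec-finite} applied with the internal accuracy $\simeq\epsilon^{1+1/m}$ chosen above, $\depth(\Phirec_\epsilon)=\cO(1)$ and $\size(\Phirec_\epsilon)\leq C(\epsilon^{1+1/m})^{-2/m}\bigl(N_\epsilon+(\epsilon^{1+1/m})^{-2/m}\bigr)=\cO(\epsilon^{-4/m-4/m^2})$, which, using $4/m+4/m^2\leq 4+4/m$ for every integer $m\geq1$ and $0<\epsilon<1$, is in particular $\cO(\epsilon^{-4-4/m})$; the encoder and latent dimensions are $M_\epsilon=\cO(\epsilon^{-2/m})$ and $N_\epsilon=\cO(\epsilon^{-2/m})$, as claimed.

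I expect no genuine obstacle here: all the analytic content is already packaged in Lemmas~\ref{lemma:FinReg2d}, \ref{lemma:enc-finite-1}, \ref{lemma:dec-finite} and Proposition~\ref{prop:general-ON}, so the exercise is bookkeeping. The one point requiring care is that aggregating the decoder error in $\ell^2$ over the $N_\epsilon+1\simeq\epsilon^{-2/m}$ feature functions drives the internal decoder accuracy down to $\epsilon^{1+1/m}$, which in turn inflates the underlying finite-element dimension and makes $\Phirec_\epsilon$ the largest of the three subnetworks. I would also double-check that the enlarged activation class $\{\ReLU^r,\BiSU\}$ enters only through $\Phirec_\epsilon$, while $\Phiappx_\epsilon$ retains pure $\ReLU$ activation, consistent with the statement.
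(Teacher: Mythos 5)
Your proposal is correct and follows essentially the same route as the paper's proof of Theorem~\ref{th:fullONfinite-reg}: fix $N_\epsilon\simeq\epsilon^{-2/m}$ from Lemmas~\ref{lemma:FinReg2d} and \ref{lemma:greedy}, take $M_\epsilon\simeq\epsilon^{-2/m}$ from Lemma~\ref{lemma:enc-finite-1}, request internal decoder accuracy $\epsrec\simeq\epsilon^{1+1/m}$ to absorb the $(N_\epsilon+1)^{1/2}$ aggregation factor, and conclude via Proposition~\ref{prop:general-ON}. A small observation: your bookkeeping actually produces the slightly tighter bound $\size(\Phirec_\epsilon)=\cO(\epsilon^{-4/m-4/m^2})$, whereas the paper moves immediately to the looser exponent $\epsrec^{-2/m}\lesssim\epsilon^{-2/m-2}$; both land on the stated $\cO(\epsilon^{-4-4/m})$, and you correctly justified the relaxation step via $4/m+4/m^2\leq 4+4/m$.
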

\begin{proof}
  The proof follows the same line of reasoning as the proofs of Theorem
  \ref{th:fullONanalytic} and \ref{th:fullONanalytic-ReLU2}, by using 
  Proposition \ref{prop:general-ON} with $\Psi_N  = \bRealiz(\Phirec_\epsilon)$.
  \end{proof}
\begin{remark}
  The constant (with respect to the accuracy) depth of the decoding network is
  due to the wider range of activation functions used in its realization. 
  It is to be expected that a corresponding result with neural operators based strictly on 
  $\ReLU$ activation would include additional logarithmic terms in the 
  bound on the depth of the network.
\end{remark}
\subsection{Nonlinear, non-smooth data-to-operator maps}
\label{sec:ExtLip}
The present results were based on the
\emph{assumption made in Section~\ref{sec:VarFrm} 
that the data-to-operator map 
$a\mapsto L(a) $ is in $ \cL(X;\cL(Y,Y'))$, i.e., it is linear}.
Via compositionality,
the proofs developed here extend beyond this setting.
As an example for extension to a solution operator
for maps $a\mapsto L$ which are not linear (and non-smooth)
we consider the operator
$\tL : L^\infty(\Omega) \to \cL(Y, Y')$ such that,
for a real number $a_{\min{}} > 0$,
\begin{equation}
  \label{eq:nonlinL}
  \tL(a) u\coloneqq -\nabla\cdot((a_{\min{}} + |a|)\nabla u)
\end{equation}
with associated solution operator
\begin{equation*}
  \tcS :
  \begin{cases}
    L^\infty(\Omega) \to H^1_0(\Omega)\\
    a\mapsto u^a
  \end{cases}
\end{equation*}
where $u^a$ is the solution to
\begin{equation*}
  \tL(a)u^a = f \quad \text{in }\Omega,
\end{equation*}
with, e.g., homogeneous Dirichlet boundary conditions on $\partial\Omega$ and
with the previously made hypotheses on $\Omega$ and $f$.
The solution operator $\tcS$ is the composition of the analytic 
solution operator $\cS$ 
with a Lipschitz map $a\mapsto a_{\min{}} + |a|$.
Nonetheless, since the absolute value can be emulated
exactly in a NN with $\ReLU$ activation, 
some results of Section \ref{sec:algconv} extend to this setting.
\begin{corollary}
Let $d=2$ and let $\Omega$ be a polygon with a finite number of sides.
Let, for $R>0$
\begin{equation*}
  \tcK = \{ v\in W^{1, \infty }(\Omega) : \|v\|_{W^{1,\infty}(\Omega)}\leq R\}.
\end{equation*}
Then,
for all $\epsilon \in (0,1)$, 
 there exist dimensions $N(\epsilon), M(\epsilon) \in \N$,
 points $\bX_\epsilon = \{\bx_1, \dots, \bx_{M(\epsilon)}\}\subset\overline{\Omega}$,
 and operators
 \begin{equation*}
   \tcG_{\epsilon} :
   \begin{cases}
   \tcK\to Y \\
   a\mapsto  \left(\bRealiz_{\ReLU}(\Phiappx_{\epsilon}) \left( a(\bX_\epsilon) \right)  \right)\cdot\bRealiz_{\brho}(\Phirec_{\epsilon}) 
   \end{cases}
 \end{equation*}
 where $\brho\in \{\ReLU, \ReLU^2\}^{\depth(\Phirec_{\epsilon})-1}$,
 $\bRealiz_{\brho}(\Phirec_{\epsilon}) : \Omega \to \R^{N}$,
 and ${\bRealiz_{\ReLU}}(\Phiappx_\epsilon) : \R^{M}\to \R^{N}$,
 satisfying
 \begin{equation*}
   \| \tcS - \tcG_{\epsilon} \|_{L^\infty(\tcK, Y)} \leq \epsilon.
 \end{equation*}
As $\epsilon\to 0$, 
the encoder and decoder sizes are
${M_\epsilon}  = \cO(\epsilon^{-2})$, ${N_\epsilon} = \cO(\epsilon^{-2})$, 
and
\begin{align*}
   &\depth(\Phiappx_{\epsilon})  = \cO\left( \left| \log\epsilon \right|^2 \right),\qquad
   \size(\Phiappx_{\epsilon}) = \cO\left(   \epsilon^{-6} \right),
   \\
   &\depth(\Phirec_{\epsilon}) = \cO\left( 1\right),
   \qquad
   \size(\Phirec_{\epsilon}) =\cO\left(  \epsilon^{-4}\right).
 \end{align*}
\end{corollary}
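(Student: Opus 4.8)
The plan is to write $\tcS=\cS\circ\Lambda$, where $\cS$ is the analytic data-to-solution operator of Setting~\ref{setting:base-polytope} and $\Lambda\colon L^\infty(\Omega)\to L^\infty(\Omega)$, $\Lambda(a)=a_{\min{}}+|a|$, is a $1$-Lipschitz map, and then to reduce to Theorem~\ref{th:fullONfinite-reg} with smoothness index $m=1$, composed with an \emph{exact} $\ReLU$ emulation of the scalar map $t\mapsto a_{\min{}}+|t|=a_{\min{}}+\ReLU(t)+\ReLU(-t)$.

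First I would check admissibility: for $a\in\tcD$ one has $\|\Lambda(a)\|_{L^\infty(\Omega)}\le a_{\min{}}+R$, $\essinf_\Omega\Lambda(a)\ge a_{\min{}}>0$, and $\|\nabla\Lambda(a)\|_{L^\infty(\Omega)}\le\|\nabla a\|_{L^\infty(\Omega)}\le R$. Hence, with $\alpha:=a_{\min{}}+R/2$, $\beta:=R/2$ (so $0<\beta<\alpha$), the set $\cD:=\Lambda(\tcD)$ — compact in $L^\infty(\Omega)$ as the image of the equi-Lipschitz ball $\tcD$, a ball in $W^{1,\infty}(\Omega)$ being relatively compact in $L^\infty(\Omega)$ by Arzel\`a--Ascoli — satisfies $\cD\subset\cDab\cap\{v\in W^{1,\infty}(\Omega):\|v\|_{W^{1,\infty}(\Omega)}\le a_{\min{}}+2R\}$, which is the data class of Setting~\ref{set:finite-reg} with $m=1$. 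The restriction $m\ge2$ there is inessential for Lemmas~\ref{lemma:FinReg2d}, \ref{lemma:enc-finite-1} and \ref{lemma:dec-finite}, which hold verbatim for $m=1$ using continuous piecewise linear interpolation/approximation on quasi-uniform, respectively corner-graded, triangulations of $\Omega$; together with \eqref{eq:S-Lip} and the $1$-Lipschitz continuity of $\Lambda$ this also shows that $\tcS$ is Lipschitz on $\tcD$.

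Next, running the proof of Theorem~\ref{th:fullONfinite-reg} (i.e., feeding Lemmas~\ref{lemma:FinReg2d}, \ref{lemma:enc-finite-1}, \ref{lemma:dec-finite} into Proposition~\ref{prop:general-ON}) for $\cS$ on $\cD$ with target accuracy $\epsilon$ and $m=1$ yields $M_\epsilon=\cO(\epsilon^{-2})$ nodes $\{\bx_i\}\subset\Omega$ with associated $P_1$ hat functions $\Xi_{M_\epsilon}$ and the \emph{linear} nodal-interpolation encoder $\cE^{\mathrm{lin}}_{M_\epsilon}(w)=(w(\bx_1),\dots,w(\bx_{M_\epsilon}))$, an approximator NN $\Phi^{\mathrm{lin}}_\epsilon$ and a decoder NN $\Phirec_\epsilon$ with $N_\epsilon=\cO(\epsilon^{-2})$ and $\depth(\Phi^{\mathrm{lin}}_\epsilon)=\cO(|\log\epsilon|^2)$, $\size(\Phi^{\mathrm{lin}}_\epsilon)=\cO(\epsilon^{-6})$, $\depth(\Phirec_\epsilon)=\cO(1)$, $\size(\Phirec_\epsilon)=\cO(\epsilon^{-8})$, such that $\|\cS(w)-\bRealiz_{\brho}(\Phirec_\epsilon)\cdot(\bRealiz_{\ReLU}(\Phi^{\mathrm{lin}}_\epsilon)\circ\cE^{\mathrm{lin}}_{M_\epsilon}(w))\|_Y\le\epsilon$ for all $w\in\cD$. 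Here the $P_1$ interpolant $\cE^{\mathrm{lin}}_{M_\epsilon}(w)\cdot\Xi_{M_\epsilon}$ of $w\in\cD$ again takes values in $[a_{\min{}},a_{\min{}}+R]$, hence lies in $\cDab$ (with $\alpha,\beta$ as above), so the hypotheses of Proposition~\ref{prop:appx} are met and no further $M_0$-shift is needed.

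The step I expect to be the crux is reconciling the linearity required of the encoder in \eqref{eq:G=RAE} with the nonlinearity of $\Lambda$. This works because $\Lambda$ acts \emph{pointwise} and thus commutes with nodal evaluation: $\cE^{\mathrm{lin}}_{M_\epsilon}(\Lambda(a))=(a_{\min{}}+|a(\bx_i)|)_{i=1}^{M_\epsilon}$ is obtained from $\cE^{\mathrm{lin}}_{M_\epsilon}(a)=(a(\bx_i))_i$ by applying $t\mapsto a_{\min{}}+\ReLU(t)+\ReLU(-t)$ coordinatewise, a map represented \emph{exactly} by a depth-two $\ReLU$ network; applied blockwise it yields a NN $\Phi_{\mathrm{abs}}$ with $\depth(\Phi_{\mathrm{abs}})=2$, $\size(\Phi_{\mathrm{abs}})=\cO(M_\epsilon)=\cO(\epsilon^{-2})$. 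I would then set $\cE_{M_\epsilon}:=\cE^{\mathrm{lin}}_{M_\epsilon}$, $\Phiappx_\epsilon:=\Phi^{\mathrm{lin}}_\epsilon\sconc\Phi_{\mathrm{abs}}$ and keep $\Phirec_\epsilon$, so that
\begin{equation*}
\tcG_\epsilon(a)=\bRealiz_{\brho}(\Phirec_\epsilon)\cdot\bigl(\bRealiz_{\ReLU}(\Phiappx_\epsilon)\circ\cE_{M_\epsilon}(a)\bigr)=\bRealiz_{\brho}(\Phirec_\epsilon)\cdot\bigl(\bRealiz_{\ReLU}(\Phi^{\mathrm{lin}}_\epsilon)\circ\cE^{\mathrm{lin}}_{M_\epsilon}(\Lambda(a))\bigr)
\end{equation*}
approximates $\cS(\Lambda(a))=\tcS(a)$ to within $\epsilon$ uniformly over $\tcD$. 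Since sparse concatenation adds depths and at most doubles sizes, $\depth(\Phiappx_\epsilon)=\cO(|\log\epsilon|^2)$ and $\size(\Phiappx_\epsilon)=\cO(\epsilon^{-6})+\cO(\epsilon^{-2})=\cO(\epsilon^{-6})$, while $\Phirec_\epsilon$ is unchanged; this gives the asserted bounds. The remaining work — bookkeeping of constants and checking that the $P_1$ interpolants of $\Lambda(a)$ stay in $\cDab$ — is routine, everything else being a direct transcription of Section~\ref{sec:algconv} with $m=1$.
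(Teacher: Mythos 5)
Your proof is correct and follows essentially the same route as the paper: a point-evaluation encoder, an exact $\ReLU$ realization of $t\mapsto a_{\min{}}+|t|=a_{\min{}}+\ReLU(t)+\ReLU(-t)$ prepended to the approximator, and an application of Theorem~\ref{th:fullONfinite-reg} with $m=1$ on the Lipschitz image $\Lambda(\tcD)$. The paper's own proof is a very brief sketch of exactly this argument; you usefully supply the checks the paper leaves implicit (compactness of $\Lambda(\tcD)$ via Arzel\`a--Ascoli, admissible $\alpha,\beta$ so that $\Lambda(\tcD)\subset\cDab$, preservation of $\cDab$ under the $P_1$ interpolation encoder, and in particular the observation that the $m\geq 2$ restriction in Setting~\ref{set:finite-reg} — which the paper's invocation of Theorem~\ref{th:fullONfinite-reg} with $m=1$ quietly sidesteps — is inessential for Lemmas~\ref{lemma:FinReg2d}, \ref{lemma:enc-finite-1}, \ref{lemma:dec-finite}).
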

\begin{proof}
  The statement follows from Theorem \ref{th:fullONfinite-reg} (with $m=1$),
  since there exists a NN $\Phi^{\mathrm{abs}}$ such
  that
  \begin{equation*}
    \bRealiz_{\ReLU}(\Phi^{\mathrm{abs}})(a(\bx_1), \dots,
    a(\bx_{M_\epsilon}))_i = a_{\min{}} + | a(\bx_i) |, \qquad \forall i\in \{1, \dots, M_\epsilon\},
  \end{equation*}
  where $\{\bx_i\}_{i=1}^M$ are the encoding points.
  The rest of the proof follows directly from Theorem \ref{th:fullONfinite-reg},
  remarking that $\{v = a_{\min{}} + |a|:a \in \tcK\}$ is contained in a ball in $W^{1, \infty}(\Omega)$
\end{proof}
A formal and more systematic treatment of this (anecdotal) extension is the object of future work.
\section{Conclusion and Discussion}
\label{sec:Concl}
We proved expression rate bounds for a class of neural operators $\cG$ 
approximating the coefficient-to-solution maps $\cS$ 
for elliptic boundary value problems of linear, 
divergence-form operators.
Specifically, 
we considered homogeneous Dirichlet boundary conditions,
in a bounded, polytopal domain $\Omega \subset \R^d$ 
in space dimension $d=2$
(and $d=3$ in Theorem \ref{th:fullONanalytic}) 
and the architecture \eqref{eq:G=RAE}, 
i.e.
$\cG = \cR\circ \cA \circ \cE$, with 
encoder $\cE$, decoder $\cR$ and approximator network $\cA$.
Our results emphasized proving the \emph{existence of neural operators
with poly-logarithmic and algebraic bounds on the number of parameters} with
respect to the (worst-case) error, for, respectively, 
analytic data and for data with finite regularity.
This constitutes a step in the direction of the full, 
mathematical analysis of these techniques.
Such analysis would also involve an analysis of their generalization properties and 
of the training algorithms used in practical computations.
The comparison with other polynomial, neural, and iteration-based operator surrogates such as \cite{FSZ25_1134}, is the topic of future work.
\appendix
%
\section{Approximation theory with piecewise polynomials}
\subsection{Approximation of analytic functions in $\overline{\Omega}$}
\label{app:NwdthAn}
Let $d\in\{2,3\}$. 
For any set $U\subset\R^d$ and a positive constant $B>0$, 
we denote in this section
  \begin{equation*}
    \cA(U; B) = \left\{ v\in C^\infty(U) : \| v \|_{W^{k, \infty}(U)}
    \leq 
    B^{k+1}k!,\;\forall k\in\N_0 \right\}.
  \end{equation*}
  We show that there exist $C, b>0$ such that, for all $N\in\N$,
  \begin{equation*}
   d_N(\cA(\overline{\Omega}; B), L^\infty(\Omega)) 
   \leq C\exp(-bN^{1/d}).
  \end{equation*}
We are in Setting~\ref{set:analytic}, and assume that
$\Omega \subset \R^d$ is a 
bounded, polytopal domain with straight sides ($d=2$) resp. plane faces ($d=3$).

In either case, let $\cT = \{T \}$ denote a regular, 
finite partition of $\Omega$ into open, nondegenerate $d$-simplices $T$ 
(triangles if $d=2$ and tetrahedra if $d=3$).
This is to say that for all $T\in \cT$ it holds that $|T|>0$ 
and 
that each pair of simplices $T,T'\in \cT$ 
have closure-intersection $\overline{T}\cap \overline{T'}$ which is 
either empty, or an entire $k$-simplex with $0\leq k \leq  d-1$, 
so that $\cT$ is a simplicial complex.

By assumption, for all $T\in \cT$ it holds that $f\in \cA(\overline{T}; B)$.
Partition $T\in \cT$ into $d+1$ convex subdomains $Q_{T,i}$, $i=0,1,...,d$, 
which are the convex hull
of 
a) the barycenter of $T$, 
b) one vertex $v_T$ of the $d+1$ vertices of $T$, 
and 
c) all barycenters of the $d$ 
boundary simplices $T_\partial\subset \partial T$ which abut $v_T$. 
Then $f$, restricted to any of the $Q_{T,i}$, belongs to
$\cA(\overline{Q_{T,i}}; B)$.
Further, 
each $\overline{Q_{T,i}}$ is the image of the unit cube $[-1,1]^d$
under a $d$-linear mapping $G_{T,i}$: 
$$
\overline{Q_{T,i}} = G_{T,i}([-1,1]^d) \;.
$$
The $d$-linear maps $G_{T,i}$ are real-analytic in $[-1,1]^d$, 
being (component-wise) $d$-linear.
Due to $|T|>0$, also $|Q_{T,i}|>0$, whence $G_{T,i}$ is bijective.
The maps $G_{T,i}$ admit therefore bi-holomorphic extensions
to some open neighborhood $\widetilde{G_{T,i}} \subset \C^d$ such that
$\overline{G_{T,i}} \subset \widetilde{G_{T,i}}$, with 
${\rm dist}(\partial \widetilde{G_{T,i}}, \partial G_{T,i})>0$.

The assumption that the data $f$ be real analytic in $\overline{G_{T,i}}$, 
implies that the composition
$\hat{f}_{T,i} := f|_{G_{T,i}} \circ G_{T,i}$ 
is real-analytic in $\hat{Q} := [-1,1]^d$, for all $T\in \cT$ and $i=0,1,...,d$.
Hence, also $\hat{f}_{T,i}$ admits a holomorphic extension to a 
bounded, open neighborhood $\tilde{Q}$ of $\hat{Q}$ in $\C^d$, 
i.e., 
it holds that $\hat{Q} \subset \tilde{Q} \subset \C^d$ with strict inclusions.

This implies, by a tensor product argument, 
that the $d$-variate tensor product of the univariate 
Gauss-Legendre-Lobatto (GLL) interpolation operator $\cI_p$ in $[-1,1]$ 
of polynomial degree $p\geq 1$, 
$\cI_p^{\otimes d}$ ($d$-fold algebraic tensor product),
admits, for every fixed $k\geq 0$ and for every $p\geq k$, 
the exponential error bounds 
$$
\| \hat{f}_{T,i} - \cI^{\otimes d} \hat{f}_{T,i} \|_{W^{k,\infty}(\hat{Q})} 
\leq 
C_{T,i} \exp(-b_{T,i} p), \quad
p\in \N, \;T\in \cT, \;i\in \{0,1,\dots,d \}
$$
with positive constants $b_{T,i}, C_{T,i} > 0$ 
which in general depend on $k$, $T$, and $i$.

We transport the local GLL interpolants 
$\cI^{\otimes d} \hat{f}_{T,i} \in \mathbb{Q}_p^{d}$ to $Q_{T,i}$:
$$
f^p_{T,i} 
:= 
\left( \cI^{\otimes d} \hat{f}_{T,i} \right) \circ G_{T,i}^{(-1)}\;,
\quad 
T\in \cT, \; i=0,1,...,d \;,
$$
and obtain a Lipschitz-continuous, piecewise polynomial interpolant 
$f^p_T$ of total polynomial degree $pd$  on every $T\in \cT$
by assembling the $d+1$ local interpolants $f^p_{T,i}$ on the $Q_{T,i}$
(to verify continuity, we use the regularity of simplicial partition $\cT$, 
 and 
 that the one-sided traces of the interpolants on the boundaries $\partial Q_{T,i}$ 
 coincide and equal 
 the (affinely-transported) GLL interpolants of the boundary traces of $f_{T,i}$).

By the same argument, we further assemble the local GLL interpolants on each
$T\in \cT$ into a global in $\overline{\Omega}$ continuous, 
piecewise polynomial of (separate) degree $p\geq 1$
interpolant $f^p_{\cT}$ in $\Omega$.

We verify the exponential consistency bound.
For $k=0,1$, 
there are constants $b_{\cT}, C_{\cT} > 0$ which depend on $f$ and on $\Omega$, 
such that for all $p\geq 1$
\begin{align*}
\| f - f^p_{\cT} \|_{W^{k,\infty}(\Omega)} 
&= 
  \max_{T\in \cT} \| f_T - f^p_{T} \|_{ W^{k,\infty}(T)}\\
&\leq 
\max_{T\in \cT, i=0,1,...,d} \| f_T - f^p_{Q_{T,i}} \|_{ W^{k,\infty}(Q_{T,i}) }
\\
&\leq
\max_{T,i} C_{T,i} 
\exp(-(\min_{T,i} b_{T,i}) p) 
  \eqqcolon
C_{\cT} \exp(-b_{\cT} p) \;.
\end{align*}
%
This completes the proof.
\subsection{Approximation rates for the set of solutions}
\label{sec:Nwidth-polytope}
Solutions of elliptic boundary value problems 
in polytopal domains $\Omega$ are known to belong to weighted Sobolev spaces,
with weights accounting for non-smoothness in the vicinity of
corners and edges of the boundary of the domain $\Omega$. 
We prove in Lemma \ref{lemma:nwidth-exp} 
the exponential decay of the approximation error for solution sets obtained with
analytic data. 
In Lemma \ref{lemma:FinReg2d} we address 
instead the case of data with finite regularity, 
and show algebraic rate bounds.

\begin{lemma}
  \label{lemma:nwidth-exp}
   Let $0<\beta<\alpha<\infty$, $A>0$, and $\kappa=3$ if $d=2$, $\kappa=5$ if $d=3$.
   Let $\Omega$ be a bounded, open polytope with Lipschitz boundary, 
   assume Setting \ref{setting:base-polytope}, and that the source term $f$ is analytic in $\overline{\Omega}$.
   Let 
  \begin{equation}
    \label{eq:Uanalytic-Nwidth}
    \cU = \cS\left(\left\{ v\in\cDab : \| v\|_{W^{m,\infty}(\Omega)} \leq A^{m+1}m!, \, \forall m\in\N_0\right\}  \right).
  \end{equation}
  Then, there exist $C, b>0$ such that for all $N\in\N$ there exists a space
  $V_N\in Y$ of piecewise polynomial functions such that $\dim(V_N) =N$ and
  \begin{equation*}
    \dist(\cU, V_N) \leq C\exp(-b N^{1/\kappa}).
  \end{equation*}
  When $N=2$, the meshes associated to the spaces $V_N$ are shape regular.
\end{lemma}
\begin{proof}
  For input data $a\in \cKab$ and for a source term $f$ analytic in $\overline{\Omega}$,
  the functions in $\cU$ are known to be weighted analytic in $\Omega$, with 
  corner-weights in polygons $\Omega$ in dimension $d=2$ (e.g. \cite{BG2d})
  with corner-edge weights in polyhedra $\Omega$ in dimension $d=3$ 
  (see, e.g. \cite{Costabel2012,BG3d-I,BG3d-II}). 
  In particular, given 
  $a\in\cKab = \left\{ v\in\cDab : \| v\|_{W^{m,\infty}(\Omega)} \leq A^{m+1}m!, \, \forall m\in\N_0\right\} $,
  there exists $M \geq 1 $ such that for all $\nu\in  \N_0^d$  
  the \emph{weighted analytic} estimates
  \begin{equation}
    \label{eq:bound-psi-analytic}
    \| w_\nu \partial^\nu \cS(a) \|_{L^2(\Omega)} \leq M^{|\nu|+1}|\nu|!
  \end{equation}
  hold, for a weight functions $w_{\nu}$ 
  that approach zero polynomially with respect to the distance from the corners and (when $d=3$)
  the edges of the boundary of the domain.

  Furthermore, by Remark 1.6.5 of \cite{Costabel2010} and inspecting the
  proof of the above mentioned Theorem 4.4 and Theorem 6.8 of \cite{Costabel2012},
  it can be seen that the constant $M$ in \eqref{eq:bound-psi-analytic} only depends on
  $\alpha, \beta, f$, on the constant $A$ in \eqref{eq:Uanalytic-Nwidth}, 
  and on the domain $\Omega$. 
  It can therefore be chosen uniformly over $\cU$.

  Classical results in $hp$ approximation theory (see, e.g., \cite{FeiSchhpGev,SchBook,SS15,BGhp3d,Opschoor2024})
  imply the result.
\end{proof}
In the following Lemma \ref{lemma:FinReg2d} and Remark \ref{rmk:EllReg2d} 
we will employ the corner-weighted, 
Hilbert spaces considered by Kondrat'ev 
\begin{equation*}
  \calK^m_\gamma(\Omega) \coloneqq 
\left\{ v: r^{|\nu|-\gamma}\partial^\nu v \in L^2(\Omega),\, \forall |\nu|\leq m  \right\}.
\end{equation*}
defined for integer $m$ and $\gamma\in\R$. 
Here, 
for $x\in \Omega$, $r(x)$ denotes the distance to a corner of 
the polygon $\Omega$ situated nearest to $x$, 
and for
$\nu = (\nu_1, \nu_2)\in \N^2_0$ a multi-index
we have written 
$\partial^\nu = \partial_{x_1}^{\nu_1}\partial_{x_2}^{\nu_2}$ 
and 
$|\nu| = \nu_1+\nu_2$.
\begin{lemma} \label{lemma:FinReg2d}
  Let $0<\beta<\alpha<\infty$, $d=2$, $R>0$, $m\in \N$, 
  and let $\Omega$ be a
  plane polygon with boundary consisting of a finite number of straight sides.
  Assume Setting \ref{setting:base-polytope}, 
  that the right-hand side $f$ in \eqref{eq:sol-polytope} 
  has regularity 
  $f\in \calK^{m-1}_{\gamma-1}(\Omega)$ for some $\gamma > 0$ 
  and write
  \begin{equation*}
    \cU = \cS\left(\{v\in \cDab : \| v\|_{W^{m, \infty}(\Omega)} \leq R\}\right).
  \end{equation*}
  Then, there exists $C>0$ (depending on $\alpha,\beta,\gamma,\Omega$, $m$, $R$) 
  such that, for all $N\in \N$, there exist spaces $V_N\subset Y$ of piecewise
  polynomials functions such that $\dim (V_N) = N$ and
  \begin{equation*}
    \dist(\cU , V_N) \leq  C N^{-m/2}.
  \end{equation*}
\end{lemma}
\begin{proof}
Take the coefficient $a$ in \eqref{eq:problem} in the set
$\cDab\cap W^{m,\infty}(\Omega)$.
With the regularity $f\in \calK^{m-1}_{\gamma - 1}(\Omega)$ assumed here
for some $\gamma>0$ (see \cite{BNL17} and below),
we infer from \cite[Theorem 1.1]{BNL17}
that the weak solution $u^a\in Y$
corresponding to this data $a$ belongs to the corner-weighted
Kondrat'ev space $\calK^{m+1}_{\gamma+1}(\Omega)$,
with corresponding \textit{a priori} estimates.
In particular, 
there exists a constant $c>0$ that depends on $m$ such that
$$
\sup_{ a\in \cDab\cap W^{m, \infty}(\Omega)}
       \|u^a \|_{\calK^{m+1}_{\gamma+1}(\Omega)}
\leq c \| a \|_{W^{m,\infty}(\Omega)}^{n}
$$
for some integer exponent $n = n(m)$ specified in \cite[Theorem 1.1]{BNL17}.

If the admissible input data $a$ in \eqref{eq:problem} belong to some ball
in $W^{m, \infty}(\Omega)$,
the solutions $u^a = \cS(a)$ belong to a corresponding bounded subset
of $\calK^{m+1}_{\theta+1}(\Omega)$ for some $\theta>0$
(depending on the corner angles of $\Omega$ and on $\gamma$).

Functions in $Y\cap \calK^{m+1}_{\theta+1}(\Omega)$ are known to admit
optimal approximation rates from 
standard continuous, piecewise polynomial
Lagrangian finite element spaces of degree $m\geq 1$ 
on regular triangulations $\cT$ of $\Omega$ with
suitable mesh-refinement towards the corners of $\Omega$. 
Denoting the corresponding 
finite-dimensional subspaces of $Y=H^1_0(\Omega)$ by $\{ V^m_h \}_{h>0}$, 
with the parameter 
$h = \max\{ {\rm diam}(T): T\in \cT \}$ signifying the maximal diameter 
of triangles $T\in \cT$, 
in \cite[Theorem 4.4, Equation (19)]{BNZ2d} it is shown that 
there exists a constant $C(m,\theta,\Omega, \cKabm)>0$ such that 
$$
\inf_{u_h\in V^m_h} \| u-u_h\|_{H^1_0(\Omega)} 
\leq 
C(m,\theta,\Omega, \cD) ({\rm dim}(V^m_h))^{-m/2} \| u \|_{\calK^{m+1}_{\theta+1}(\Omega)}
\;.
$$
It follows that for the solution set
$\cU = \cS(\cDab\cap \{\|v\|_{ W^{m,\infty}(\Omega)}\leq R\})$
for some  $R>0$
it holds that
\begin{equation}
  \label{eq:FEapprox-m}
\sup_{a \in \cDab \cap \{\|v\|_{ W^{m,\infty}(\Omega)}\leq R\} }
\inf_{v_h \in V^m_h} 
\| u^a - v_h \|_{H^1_0(\Omega)}
\leq 
C 
(\dim(V^m_h))^{-m/2}
\end{equation}
where $C$ depends on $m,R,f,\Omega$.
\end{proof}
We add some remarks on the previous result.
\begin{remark}\label{rmk:EllReg2d}
The regularity theory developed in \cite{BNL17} allows for 
general, linear elliptic second order divergence form differential operators, 
i.e. 
\eqref{eq:problem} could have also first order differential and reaction terms.
It also admits so-called ``curvilinear polygonal'' $\Omega$, 
with possibly curved sides, and a finite number of (non-cuspidal) corners.

Homogeneous Dirichlet BCs in \eqref{eq:problem}
appear in the statement of \cite[Theorem 1.1]{BNL17}. 
However, \cite[Theorem 4.4]{BNL17} proves 
the same $\calK^{m+1}_{a+1}(\Omega)$ regularity 
also in the general setting of mixed homogeneous Dirichlet 
and conormal Neumann boundary conditions, 
as indicated in Remark~\ref{rmk:MixBCs}.

In the statement of Lemma~\ref{lemma:FinReg2d} we assumed finite
$W^{m,\infty}(\Omega)$ regularity of the input data, i.e., the 
diffusion coefficient $a$ in \eqref{eq:problem}.
The \emph{local regularity} $W_{\mathrm{loc}}^{m,\infty}(\Omega)$ 
is well-known to be essentially necessary for variational solutions
$u$ of \eqref{eq:problem} to belong to $H^{m+1}_{\mathrm{loc}}(\Omega)$, 
see, e.g. \cite{GilbargTrudinger}. 
However,
\cite[Theorems 1.1 and 4.4]{BNL17} establish the 
$$\calK^{m+1}_{a+1}(\Omega)$$ solution regularity for 
input data $a$ belonging to 
corner-weighted space, of integer order $m\geq 0$ 
defined by 
$$
\calW^{m,\infty}(\Omega) 
= 
\{ a:\Omega \to \R : r_{\Omega}^{|\nu|}\partial^\nu a \in L^\infty(\Omega), \;
   |\nu| \leq m\}
\;,\;\; m\in\N_0\;.
$$
These spaces are strictly larger than 
$W^{m,\infty}(\Omega)$ 
for every integer differentiation order $m > 0$.
The presently obtained NN emulation rates for \eqref{eq:problem} 
remain valid for inputs $a\in \calW^{m,\infty}(\Omega)$.

We addressed only $\Omega\subset \R^2$. 
Similar results hold in dimension $d=1$
and can be expected also in dimension $d=3$ 
(where proofs of $\calK^{m+1}_{a+1}(\Omega)$ solution regularity 
in polyhedral domains $\Omega$ only seem to be available for 
certain constant coefficient differential operators).
In these cases,
one could expect the asymptotic Kolmogorov $N$-width bound $O(N^{-m/d})$.
\end{remark}

\bibliographystyle{abbrv}
\bibliography{library}

\end{document}